\documentclass[a4paper,11pt]{article}

\usepackage[T1]{fontenc}
\usepackage[utf8]{inputenc}
\usepackage[provide=*,english]{babel}
\usepackage{amsmath,amssymb,amsthm,mathtools,parskip,csquotes}
\usepackage[margin=2.7cm]{geometry}

\theoremstyle{plain}
\newtheorem{theorem}{Theorem}[section]
\newtheorem{lemma}[theorem]{Lemma}
\newtheorem{proposition}[theorem]{Proposition}
\newtheorem{corollary}[theorem]{Corollary}
\newtheorem*{conjecture}{Conjecture}
\newtheorem*{mainresult}{Main Result}

\theoremstyle{definition}
\newtheorem{definition}[theorem]{Definition}

\usepackage{titlesec}

\titleformat{\section}
  {\normalfont\large\bfseries}
  {\thesection}
  {1em}
  {}
\titleformat{\subsection}
  {\normalfont\normalsize\bfseries}
  {\thesubsection}
  {1em}
  {}

\usepackage[
    backend=biber,
    style=alphabetic,
    giveninits=true,
    sorting=nyt,
    maxbibnames=99
]{biblatex}

\renewbibmacro*{in:}{%
    \ifentrytype{article}
        {}
        {\printtext{\bibstring{in}\intitlepunct}}%
}

\DeclareFieldFormat[article]{number}{no\adddot\space#1}

\renewbibmacro*{volume+number+eid}{%
    \printfield{volume}%
    \setunit{\addcomma\space}%
    \printfield{number}%
    \setunit{\addcomma\space}%
    \printfield{eid}%
}

\usepackage[hidelinks]{hyperref}

\newcommand{\diff}{\,\mathrm d}

\title{The Petty Conjecture for Convex Bodies of Revolution}
\author{Florian Mielke-Sulz\thanks{Email: \texttt{florian.mielkesulz@gmail.com}}}
\begin{document}

\maketitle

\begin{abstract}
\noindent
The Petty conjecture asserts that the quotient \(V_n(\Pi K)/V_n(K)^{n-1}\) is minimized among all convex bodies precisely by ellipsoids. In this paper, we prove the conjecture for all convex bodies of revolution in \(\mathbb R^n\), \(n\geq3\), including the equality cases.
\end{abstract}

{\small
\noindent\textit{2020 Mathematics Subject Classification:}
Primary 52A40. Secondary 52A20, 52A38. \\
\noindent\textit{Keywords:}
Petty conjecture, projection body, body of revolution,
affine isoperimetric inequality.
\par
}

\section{Introduction}
Here and throughout, a convex body in \(\mathbb R^n\) is a compact convex set with nonempty interior. For \(m\geq1\), we denote by \(V_m\) the \(m\)-dimensional Lebesgue measure, by \(B^m\) the Euclidean unit ball in \(\mathbb R^m\), by \(\kappa_m=V_m(B^m)\) its volume, and by \(\mathbb S^{m-1}=\partial B^m\) the Euclidean unit sphere. Moreover, \(\langle\cdot,\cdot\rangle\) denotes the Euclidean inner product. The support function of a convex body \(K\) is defined by
\[
    h_K(u)=\max_{x\in K}\langle x,u\rangle,
    \qquad u\in\mathbb S^{n-1},
\]
and its projection body \(\Pi K\) is the origin-symmetric convex body whose support function is given by
\[
    h_{\Pi K}(u)
    =
    V_{n-1}\bigl(\operatorname{proj}_{u^\perp}K\bigr),
    \qquad u\in\mathbb S^{n-1},
\]
where \(\operatorname{proj}_{u^\perp}K\) denotes the orthogonal projection of \(K\) onto the hyperplane \(u^\perp\).

The conjecture formulated by Petty \cite{petty1971isoperimetricproblems} has been an open problem in convex geometry for decades \cite{gardner2025geometrictomographyupdate}. For further background, see also \cite{lutwak1990conjecturedprojection}.

\begin{conjecture}[Petty conjecture]
Let \(n\geq3\), let \(K\subseteq\mathbb R^n\) be a convex body, and let \(B^n\subseteq\mathbb R^n\) be the Euclidean unit ball. Then
\begin{equation} \label{conj}
    \frac{V_n(\Pi K)}{V_n(K)^{n-1}}
    \geq
    \frac{V_n(\Pi B^n)}{V_n(B^n)^{n-1}}.
\end{equation}
Equality holds if and only if \(K\) is an ellipsoid.
\end{conjecture}

In a recent preprint, Chen, Feng, Li, Xi, and Xu prove the Petty
conjecture in dimension three, including the equality cases
\cite{ChenFengLiXiXu2026}. For general convex bodies in dimensions \(n\geq4\), the conjecture remains open. Earlier partial results include results for certain classes of three-dimensional bodies \cite{saroglou2011volumesprojectionbodies}, a local result for convex bodies with absolutely continuous surface area measure whose curvature function, after a linear transformation, is sufficiently close to the constant function \(1\) in \(L^\infty\) \cite{SaroglouZvavitch2017}, and a result under additional assumptions on the minimal surface area within the volume-preserving affine class \cite{GiannopoulosPapadimitrakis1999}. Saroglou also showed that a classical proof approach based on Steiner symmetrization cannot work, since the Petty quotient \(V_n(\Pi K)/V_n(K)^{n-1}\) is not monotone under Steiner symmetrization in general \cite{saroglou2011volumesprojectionbodies}.

The class reduction due to Schneider \cite{schneider1987geometricinequalities} and extended by Lutwak \cite{Lutwak1990Quermassintegrals} to projection bodies of different degrees shows that, after a suitable translation, every minimizer of the Petty quotient satisfies
\[
    \Pi^2K=\alpha K
    \qquad\text{for some }\alpha>0,
\]
where \(\Pi^2K=\Pi(\Pi K)\). However, Weil showed that solutions of this equation need not be ellipsoids \cite{Weil1971}. On the other hand, Ivaki proved that among the solutions of this equation for which \(h_{AK}\) is sufficiently close to \(1\) in \(C^2(\mathbb S^{n-1})\) for some invertible linear map \(A\), only origin-centered ellipsoids occur \cite{ivaki2018localuniqueness}. This local uniqueness result was subsequently generalized by Ortega-Moreno and Schuster to a broad class of sufficiently regular Minkowski valuations, further developing the fixed-point approach to the Petty conjecture \cite{ortegamoreno2021fixedpointsminkowskivaluations}.

A convex body \(K\subseteq\mathbb R^n\) is called a body of revolution if it is invariant under all rotations fixing some affine line pointwise. Since translations and orthogonal transformations preserve both the Petty quotient and the class of bodies of revolution, we may assume that this axis is \(\mathbb Re_n\), where \(e_n=(0,\ldots,0,1)\) denotes the \(n\)-th standard basis vector. For bodies of revolution, Saroglou proved a non-sharp lower bound \cite{saroglou2015secondprojectionbody}. In dimensions \(n\geq4\), the sharp inequality together with its equality cases had remained open for this class. We prove the Petty conjecture, including its equality cases, for convex bodies of revolution in every dimension \(n\geq3\):

\begin{mainresult}[Petty conjecture for convex bodies of revolution]
Let \(n\geq3\), and let \(K\subseteq\mathbb R^n\) be a convex body of revolution. Then \eqref{conj} holds, with equality if and only if \(K\) is an ellipsoid.
\end{mainresult}

Independently, Dorrek found a proof of the same result with the assistance
of AI and describes on his website how he arrived at it \cite{Dorrek2026}.

The proof presented in this paper begins with normalized Blaschke symmetrization, which
preserves the projection body. By the Kneser--S\"uss inequality,
this operation does not decrease volume and hence does not increase
the Petty quotient. It therefore suffices to consider
origin-symmetric bodies. After a suitable linear normalization, a
smooth body of revolution \(K=-K\) has the representation
\[
    K=\{(x,z):x\in B^{n-1},\ |z|\leq f(|x|)\},
\]
where \(f\) is a concave boundary profile. The corresponding profile of
the unit ball is \(f_0(r)=\sqrt{1-r^2}\). With \(y=r^{n-1}\), set
\[
    b_K(y)=-f'(y^{1/(n-1)}),
    \qquad
    b_0(y)=-f_0'(y^{1/(n-1)}).
\]
Writing \(g=b_K/b_0\) and \(b_g=gb_0=b_K\), we introduce integral
functionals \(I\) and \(T\) satisfying
\[
    V_n(K)=\frac{2\kappa_{n-1}}{n-1}I(b_g),
    \qquad
    V_n(\Pi K)=\frac{2^n\kappa_{n-1}^{\,n}}{n!}T(b_g),
\]
where the first identity follows from integration by parts and
the second from the volume formula for zonoids. Comparison with the unit ball reduces \eqref{conj} to the analytic
inequality
\[
    \frac{T(b_g)}{T(b_0)}
    \geq
    \left(\frac{I(b_g)}{I(b_0)}\right)^{n-1}.
\]
A determinant estimate reduces this inequality to a lower bound for a
multilinear functional \(\mathcal B\). The spherical
Blaschke--Petkantschin formula expresses its diagonal in terms of
determinants of moment matrices. Rotational symmetry reduces these
determinants to the axial and transverse eigenvalues. Integration by
parts then yields a weighted integral of \(\alpha_g^{n-1}\), where
\(\alpha_g\) is a scalar integral transform of \(g\). Hölder's inequality
gives the required bound.

This proves the inequality for smooth bodies and, by approximation,
for arbitrary bodies of revolution. For the nonsmooth equality case,
we introduce profile measures \(\rho_K\) and \(\rho_0\), which also
capture cylindrical boundary parts. The equality condition in Hölder's
inequality and the injectivity of the extended transform imply
\(\rho_K=c\rho_0\), which characterizes the ellipsoidal profile. Finally, equality in the Kneser--S\"uss inequality shows that
the original body is a translate of its Blaschke symmetral,
which completes the characterization of equality.

\section{Notation and Preliminaries}
Throughout, let \(n\geq3\). For \(m\geq1\), let \(\diff\sigma_m\) denote the surface measure on \(\mathbb S^m\). Moreover, let
\[
    \diff\overline\sigma_m
    =
    \frac{\diff\sigma_m}{(m+1)\kappa_{m+1}}
\]
be the associated rotation-invariant probability measure. By \(\operatorname{proj}_E\) we denote the orthogonal projection onto a linear subspace \(E\), by \(u^\perp\) the orthogonal complement of \(u\in\mathbb R^n\), by \(\mathcal H^m\) the \(m\)-dimensional Hausdorff measure, by \(G(n,k)\) the Grassmannian of \(k\)-dimensional linear subspaces of \(\mathbb R^n\), and by \(\operatorname{GL}(n)\) the group of invertible linear transformations on \(\mathbb R^n\). For \(A\in\operatorname{GL}(n)\), we set \(A^{-T}=(A^{-1})^T\). 

A zonotope is a finite Minkowski sum of line segments.
A convex body is called a zonoid if it is a limit of
zonotopes in the Hausdorff metric. Every projection body is an origin-symmetric zonoid. 

We denote the surface area measure of a convex body \(K\) by \(S_K\). By the existence and uniqueness theorem for the Minkowski problem, the Blaschke sum \(K\#L\) is determined up to translation by \(S_{K\#L}=S_K+S_L\). We denote by \(K^{\mathrm B}\) the origin-symmetric representative of
the normalized Blaschke sum \(2^{-1/(n-1)}(K\#(-K))\), that is,
\begin{equation}\label{eq:blaschke-symmetral}
    K^{\mathrm B}
    =
    2^{-1/(n-1)}(K\#(-K)).
\end{equation}
Finally, \(C^\infty_+\) denotes the class of convex bodies \(K\) whose boundary \(\partial K\) is a \(C^\infty\) hypersurface with everywhere positive Gaussian curvature. Bodies in this class will be called smooth. For further background on these notions, we refer the reader to Schneider \cite{Schneider2014}.

We now introduce the analytic quantities to which the geometric problem will later be reduced. For \(g\in C([0,1])\) with \(g\geq0\) and \(0\leq y<1\), set
\[
    b_0(y)
    =
    \frac{y^{1/(n-1)}}{\sqrt{1-y^{2/(n-1)}}},
    \qquad
    b_g(y)=g(y)b_0(y),
    \qquad
    I(b_g)=\int_0^1 b_g(y)y^{1/(n-1)}\,\diff y
\]
and
\[
\begin{aligned}
    T(b_g)
    =\int_{(0,1)^n}\int_{(\mathbb S^{n-2})^n}
      &\left|
      \det\bigl(
          (b_g(y_1)\theta_1,1),\ldots,
          (b_g(y_n)\theta_n,1)
      \bigr)
      \right|\\
      &\diff\overline\sigma_{n-2}(\theta_1)\cdots
       \diff\overline\sigma_{n-2}(\theta_n)\,
       \diff y_1\cdots\diff y_n.
\end{aligned}
\]
The values of \(b_0\) and \(b_g\) at \(y=1\) may be chosen arbitrarily, since these functions are used only up to equality almost everywhere.
Since \(b_0\in L^1([0,1])\) and the determinant is a sum of products of \(n-1\) values \(b_g(y_i)\), the quantities \(I(b_g)\) and \(T(b_g)\) are finite.

For \(u\in\mathbb S^{n-1}\), let \(\eta(u)=(1-\langle u,e_n\rangle^2)^{(n-1)/2}\), and for \(u_1,\ldots,u_{n-1}\in\mathbb S^{n-1}\), set
\[
    \Delta(u_1,\ldots,u_{n-1})
    =
    \sqrt{\det\bigl(\langle u_i,u_j\rangle\bigr)_{i,j=1}^{n-1}}.
\]
Outside the null set of linearly dependent tuples, there exists a unique oriented unit normal vector
\(N=N(u_1,\ldots,u_{n-1})\) (see, e.g.,
\cite[Section~1.7]{Federer1969}) such that
\begin{equation} \label{unitvek}
    \det(u_1,\ldots,u_{n-1},v)
    =
    \Delta(u_1,\ldots,u_{n-1})\langle N,v\rangle,
    \qquad v\in\mathbb R^n.
\end{equation}

For \(g_1,\ldots,g_{n-1}\in C([0,1])\), define
\begin{equation} \label{defB}
\begin{aligned}
    \mathcal B(g_1,\ldots,g_{n-1})
    =
    \frac1{(2\kappa_{n-1})^{n-1}}
    \int_{(\mathbb S^{n-1})^{n-1}}
      &\Delta(u_1,\ldots,u_{n-1})
       \langle N,e_n\rangle^2\\
      &g_1(\eta(u_1))\cdots g_{n-1}(\eta(u_{n-1}))\\
      &\diff\sigma_{n-1}(u_1)\cdots
       \diff\sigma_{n-1}(u_{n-1}),
\end{aligned}
\end{equation}
where the kernel is set equal to zero on the set of linearly dependent tuples.

\section{The Analytic Inequality}
The analytic core of the proof is the comparison of the functionals \(T\) and \(I\) established in Lemma~\ref{lem:analytisch}. It is based on the determinant estimate in Lemma~\ref{lem:determinantenvergleich}, the factorization of \(\mathcal B\) in Lemma~\ref{lem:B-faktorisierung}, and Hölder's inequality.

\begin{lemma}[Determinant estimate]\label{lem:determinantenvergleich}
Let \(g\in C([0,1])\) be nonnegative. Then
\begin{equation*}
    T(b_g)
    \geq
    n\,\mathcal B(g,\ldots,g).
\end{equation*}
If \(g\) is constant, then equality holds. In particular,
\begin{equation*}
    T(b_0)
    =
    n\,\mathcal B(1,\ldots,1).
\end{equation*}
\end{lemma}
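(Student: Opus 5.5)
The plan is to expand the determinant in \(T(b_g)\) through one distinguished column, integrate that column out exactly, and bound what remains below by the kernel of \(\mathcal B\). The constant \(n\) should come from symmetrizing over which column is distinguished.

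Write \(w_i=(b_g(y_i)\theta_i,1)\in\mathbb R^n\). For linearly independent \(w_1,\ldots,w_{n-1}\), let \(N'\) be the unit normal from \eqref{unitvek}. Then
\[
    |\det(w_1,\ldots,w_n)|=\Delta(w_1,\ldots,w_{n-1})\,|\langle N',w_n\rangle| .
\]
First, I would integrate out \((y_n,\theta_n)\) with the other variables fixed. The map \(\theta_n\mapsto-\theta_n\) preserves \(\overline\sigma_{n-2}\). Pairing \(w_n\) with its reflection \((-b_g(y_n)\theta_n,1)\) and using \(|a+c|+|a-c|\geq 2\max(|a|,|c|)\) gives a lower bound by an explicit function of \(N'\) and \(b_g(y_n)\). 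Because this pairing can lose too much for constant \(g\), I would keep the sharper bound \(\max(|N'_n|,\,b_g(y_n)|\langle N'',\theta_n\rangle|)\), where \(N''\) denotes the first \(n-1\) coordinates of \(N'\).

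Second, I would express the result in the variables of \(\mathcal B\). Parametrize \(u\in\mathbb S^{n-1}\) as \(u=(s\theta,t)\) with \(s=y^{1/(n-1)}\), so that \(y=\eta(u)\) and \(u/|t|=(b_0(y)\theta,\pm1)\). Hence \(w_j\) is a rescaling of \(u_j\), with the radial part multiplied by \(g(\eta(u_j))\). The push-forward of \(\sigma_{n-1}\) under \(u\mapsto(\eta(u),\theta,\operatorname{sign}t)\) is explicit, and I expect it to produce the normalization \((2\kappa_{n-1})^{-(n-1)}\) together with the Jacobian factors \(|t_j|\). Multilinearity in the \(g_j\) should then come from \(\det(w_1,\ldots,w_{n-1},e_n)=\prod_j b_g(y_j)\det(\theta_1,\ldots,\theta_{n-1})\). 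Combined with \(\Delta\,\langle N,e_n\rangle^2=\det(u_1,\ldots,u_{n-1},e_n)^2/\Delta\), this identity shows that the quantity \(\Delta(u)\langle N,e_n\rangle^2\) in \eqref{defB} is exactly what remains after the \(w_n\)-integration.

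Third, the factor \(n\). I would apply the first step with each index \(k\) in turn playing the role of \(w_n\) and average the \(n\) resulting bounds. Each term contributes one copy of the \(\mathcal B\)-kernel in the remaining \(n-1\) variables, so summing gives \(n\,\mathcal B(g,\ldots,g)\). Alternatively, one can expand along the row of ones and use a sign argument in which all \(n\) cofactors contribute the same amount.

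For constant \(g\), equality must hold in the pointwise inequality after the \(\theta_n\)-averaging. I would verify this by direct computation. After normalizing by \(|t|\), the vector \(w_n\) is the radial projection of a uniform point of the sphere, so the inner integral \(\int|\langle N',w_n\rangle|\) can be evaluated exactly, as the support function of a ball-like zonoid in direction \(N'\). This yields \(T(b_0)=n\,\mathcal B(1,\ldots,1)\), and the case of a general constant \(g=c\) follows by homogeneity.

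The main obstacle is the first step: finding a pointwise lower bound for the \(\theta_n\)-average of \(|\langle N',w_n\rangle|\) that is exactly proportional to \(\langle N',e_n\rangle^2\), weighted as in \(\mathcal B\), and sharp when \(b_g=c\,b_0\). I would first look for an exact formula in the constant case and then a convexity comparison for general \(g\), for instance Jensen's inequality in the radial variable \(b_g(y_n)\).
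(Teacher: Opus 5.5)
Your proposal has a genuine gap. The route you describe cannot produce the factor $n$, and the pointwise bound you name as the main obstacle does not exist.

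Integrating out one column gives a lower bound for all of $T(b_g)$. So if each choice of distinguished column gives $T(b_g)\geq\mathcal B(g,\ldots,g)$, averaging the $n$ bounds still gives only $T(b_g)\geq\mathcal B(g,\ldots,g)$. To reach $n\,\mathcal B(g,\ldots,g)$ along your route, you would need, after integrating out $w_n$, a pointwise bound by $n$ times the $\mathcal B$-kernel, sharp for constant $g$. This already fails for $g\equiv1$, where no inequality is used at all. In the spherical variables of your second step, $T(b_0)=(2\kappa_{n-1})^{-n}\int_{(\mathbb S^{n-1})^n}|\det(u_1,\ldots,u_n)|$, and
\[
    \int_{\mathbb S^{n-1}}|\det(u_1,\ldots,u_{n-1},v)|\,\diff\sigma_{n-1}(v)
    =2\kappa_{n-1}\,\Delta(u_1,\ldots,u_{n-1}).
\]
With the same prefactor, $n\,\mathcal B(1,\ldots,1)$ has integrand $2\kappa_{n-1}\,n\,\Delta(u_1,\ldots,u_{n-1})\langle N,e_n\rangle^2$. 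This is larger whenever $\langle N,e_n\rangle^2>1/n$, for instance for independent $u_1,\ldots,u_{n-1}$ near $e_n^\perp$. The identity $T(b_0)=n\,\mathcal B(1,\ldots,1)$ holds only after integrating over $u_1,\ldots,u_{n-1}$, because $\langle N,e_n\rangle^2$ averages to $1/n$ by rotation invariance. Moreover, what survives the $w_n$-integration is $\Delta(w_1,\ldots,w_{n-1})$ times a support function evaluated at the $g$-dependent normal $N'$. That is not the $\mathcal B$-kernel, which involves the normal $N$ of the $g$-independent vectors $u_1,\ldots,u_{n-1}$. Since the obstruction is already present for $g\equiv1$, no Jensen-type argument in one radial variable can repair it.

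The missing idea is the choice of sign in the \emph{sign argument} you mention only in passing. Put $G=g\circ\eta$ and $D_g=\det\bigl((G(u_1)\overline u_1,\langle u_1,e_n\rangle),\ldots,(G(u_n)\overline u_n,\langle u_n,e_n\rangle)\bigr)$; in your notation this is $\prod_i\langle u_i,e_n\rangle\det(w_1,\ldots,w_n)$. The paper tests $|D_g|$ against the $g$-independent sign $\varepsilon=\operatorname{sgn}\det(u_1,\ldots,u_n)$ of the ball's determinant. Then $|D_g|\geq\varepsilon D_g$ pointwise, with equality for $g\equiv c\geq0$, since then $D_g=c^{n-1}\det(u_1,\ldots,u_n)$. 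Expand $D_g$ along its last row. The $i$-th term contains $\langle u_i,e_n\rangle$ but not $G(u_i)$, and $\varepsilon$ depends on $u_i$ only through $\operatorname{sgn}\langle N_i,u_i\rangle$, where $N_i$ is the normal of $W_i=(u_1,\ldots,\widehat{u_i},\ldots,u_n)$. Hence the $u_i$-integration is explicit: $\int_{\mathbb S^{n-1}}\operatorname{sgn}\langle N_i,v\rangle\langle v,e_n\rangle\,\diff\sigma_{n-1}(v)=2\kappa_{n-1}\langle N_i,e_n\rangle$. Combined with $\det(\overline u_1,\ldots,\widehat{\overline u_i},\ldots,\overline u_n)=\Delta(W_i)\langle N_i,e_n\rangle$, each of the $n$ terms contributes exactly $\mathcal B(g,\ldots,g)$. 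So $n$ arises as a sum of $n$ terms inside a single linear lower bound, each integrated in its own free variable, not from a pointwise estimate after integrating out one fixed column. Had you used $\operatorname{sgn}D_g$ instead, you would simply recover $|D_g|$ and gain nothing computable.
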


\begin{proof}
For \(y\in(0,1)\) and \(\theta\in\mathbb S^{n-2}\), set
\[
    u=u(y,\theta)
    =
    \left(
        y^{1/(n-1)}\theta,
        \sqrt{1-y^{2/(n-1)}}
    \right)
    \in\mathbb S^{n-1}_+,
\]
where \(\mathbb S^{n-1}_+=\{u\in\mathbb S^{n-1}:\langle u,e_n\rangle>0\}\). Under the change of variables 
\[
(y,\theta)\mapsto u(y,\theta), \quad (0,1)\times\mathbb S^{n-2}\to\mathbb S^{n-1}_+\setminus\{e_n\},
\]
the change-of-variables formula reads
\begin{equation}\label{eq:direkter-sphaerenwechsel}
    \diff\overline\sigma_{n-2}(\theta)\,\diff y
    =
    \frac{\langle u,e_n\rangle}{\kappa_{n-1}}\,
    \diff\sigma_{n-1}(u).
\end{equation}
For this parametrization, \(\eta(u)=y\). Writing \(u=(\overline u,\langle u,e_n\rangle)\) with \(\overline u\in\mathbb R^{n-1}\), and setting \(G(u)=g(\eta(u))\), we have
\begin{equation}\label{eq:bg-spherical-vector}
    (b_g(y)\theta,1)
    =
    \frac1{\langle u,e_n\rangle}
    \bigl(G(u)\overline u,\langle u,e_n\rangle\bigr).
\end{equation}
Now write \(u_i=u(y_i,\theta_i)\) for \(i=1,\ldots,n\). The factors \(\langle u_i,e_n\rangle^{-1}\) and
\(\langle u_i,e_n\rangle\) in \eqref{eq:bg-spherical-vector} and
\eqref{eq:direkter-sphaerenwechsel}, respectively, cancel for each
\(i\). Hence, using also that
\[
    \left|
    \det\bigl(
        (G(u_1)\overline u_1,\langle u_1,e_n\rangle),\ldots,
        (G(u_n)\overline u_n,\langle u_n,e_n\rangle)
    \bigr)
    \right|
\]
is even in each variable \(u_i\), we obtain
\begin{equation}\label{eq:T-sphaerisch}
    T(b_g)
    =
    \frac1{(2\kappa_{n-1})^n}
    \int_{(\mathbb S^{n-1})^n}
    |D_g(u_1,\ldots,u_n)|
    \,\diff\sigma_{n-1}(u_1)\cdots\diff\sigma_{n-1}(u_n),
\end{equation}
where
\[
    D_g(u_1,\ldots,u_n)
    =
    \det\bigl(
        (G(u_1)\overline u_1,\langle u_1,e_n\rangle),\ldots,
        (G(u_n)\overline u_n,\langle u_n,e_n\rangle)
    \bigr).
\]

Set \(D_1=\det(u_1,\ldots,u_n)\) and \(\varepsilon=\operatorname{sgn}D_1\). Pointwise,
\[
    |D_g|\geq \varepsilon D_g.
\]
Hence, \eqref{eq:T-sphaerisch} gives
\begin{equation}\label{eq:T-untere-schranke}
    T(b_g)
    \geq
    \frac1{(2\kappa_{n-1})^n}
    \int_{(\mathbb S^{n-1})^n}
    \varepsilon D_g
    \,\diff\sigma_{n-1}(u_1)\cdots\diff\sigma_{n-1}(u_n).
\end{equation}

We now compute the right-hand side. Expanding \(D_g\) along the last coordinates gives
\begin{equation*}
    D_g
    =
    \sum_{i=1}^n
    (-1)^{i+n}\langle u_i,e_n\rangle
    \det(\overline u_1,\ldots,\widehat{\overline u_i},\ldots,\overline u_n)
    \prod_{j\neq i}G(u_j),
\end{equation*}
where \(\widehat{\cdot}\) means that the corresponding entry is omitted. For the \(i\)-th summand, set
\(
    W_i=(u_1,\ldots,\widehat{u_i},\ldots,u_n).
\)
Outside the null set of linearly dependent tuples, let \(N_i\) denote the oriented unit normal vector associated with \(W_i\) as in \eqref{unitvek}. Since
\[
    D_1=(-1)^{n-i}\det(W_i,u_i),
\]
we have
\[
    \varepsilon
    =
    (-1)^{n-i}\operatorname{sgn}\det(W_i,u_i).
\]
Thus, the two powers of \((-1)\) appearing in the \(i\)-th summand of \(\varepsilon D_g\) cancel, since \((-1)^{i+n}(-1)^{n-i}=1\).

We use the identities
\begin{equation}\label{eq:sphaerische-sign-identitaet}
    \int_{\mathbb S^{n-1}}
    \operatorname{sgn}\det(u_1,\ldots,u_{n-1},v)\,\langle v,e_n\rangle
    \,\diff\sigma_{n-1}(v)
    =
    2\kappa_{n-1}\langle N,e_n\rangle
\end{equation}
and
\begin{equation}\label{eq:kofaktor-N}
    \det(\overline u_1,\ldots,\overline u_{n-1})
    =
    \Delta(u_1,\ldots,u_{n-1})
    \langle N,e_n\rangle.
\end{equation}
The first follows from
\[
    \int_{\mathbb S^{n-1}}
    \operatorname{sgn}\langle N,v\rangle\,v
    \,\diff\sigma_{n-1}(v)
    =
    2\kappa_{n-1}N,
\]
and the second follows immediately from the definition of \(N\) with \(v=e_n\).

For fixed \(u_j\), \(j\neq i\), the \(i\)-th summand of
\(\varepsilon D_g\) is
\[
    \operatorname{sgn}\det(W_i,u_i)\,
    \langle u_i,e_n\rangle\,
    \det(\overline u_1,\ldots,\widehat{\overline u_i},\ldots,\overline u_n)
    \prod_{j\neq i}G(u_j).
\]
Applying \eqref{eq:sphaerische-sign-identitaet} and
\eqref{eq:kofaktor-N} to the ordered tuple \(W_i\), integration of the
\(i\)-th summand of \(\varepsilon D_g\) with respect to \(u_i\) yields
\[
    2\kappa_{n-1}
    \Delta(u_1,\ldots,\widehat{u_i},\ldots,u_n)
    \langle N_i,e_n\rangle^2
    \prod_{j\neq i}G(u_j).
\]
Together with the prefactor in \eqref{eq:T-untere-schranke}, the contribution
of the \(i\)-th summand is therefore precisely
\(
    \mathcal B(g,\ldots,g).
\)
Since all \(n\) summands give the same contribution, it follows that
\[
    \frac1{(2\kappa_{n-1})^n}
    \int_{(\mathbb S^{n-1})^n}
    \varepsilon D_g
    \,\diff\sigma_{n-1}(u_1)\cdots\diff\sigma_{n-1}(u_n)
    =
    n\,\mathcal B(g,\ldots,g).
\]
Together with \eqref{eq:T-untere-schranke}, this yields
\[
    T(b_g)\geq n\,\mathcal B(g,\ldots,g).
\]

If \(g\equiv c\) is constant, then \(D_g=c^{n-1}D_1\). Since \(c\geq0\), we thus have \(|D_g|=\varepsilon D_g\) pointwise, and hence equality holds. In particular, for \(c=1\),
\[
    T(b_0)=n\,\mathcal B(1,\ldots,1).
\]
\end{proof}

The classical Blaschke--Petkantschin formulas decompose integrals
over tuples of points according to the linear or affine subspaces
they span \cite[Section~7.2]{SchneiderWeil2008}.
To factorize \(\mathcal B\), we use the following spherical version
\cite[Lemma~3.2]{BaranyHugReitznerSchneider2017}.
The extension from nonnegative to integrable functions follows
by considering the positive and negative parts separately.

\begin{lemma}[Spherical Blaschke--Petkantschin formula]
\label{lem:spherical-BP}
Let \(F\colon(\mathbb S^{n-1})^{n-1}\to\mathbb R\) be integrable. Then
\[
\begin{aligned}
&\int_{(\mathbb S^{n-1})^{n-1}}
F(u_1,\ldots,u_{n-1})
\,\diff\sigma_{n-1}(u_1)\cdots
\diff\sigma_{n-1}(u_{n-1})\\
&\quad =
\frac{n\kappa_n}{2}
\int_{G(n,n-1)}
\int_{(\mathbb S^{n-1}\cap H)^{n-1}}
F(u_1,\ldots,u_{n-1})
\Delta(u_1,\ldots,u_{n-1})\\
&\qquad\qquad
\diff\sigma_H(u_1)\cdots
\diff\sigma_H(u_{n-1})\,\diff H,
\end{aligned}
\]
where \(\diff\sigma_H\) denotes surface measure on
\(\mathbb S^{n-1}\cap H\), and \(\diff H\) denotes the
rotation-invariant probability measure on \(G(n,n-1)\).
\end{lemma}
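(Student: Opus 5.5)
The formula is cited from \cite[Lemma~3.2]{BaranyHugReitznerSchneider2017} for nonnegative measurable \(F\). The only thing left to justify is the extension to integrable \(F\); after that we check that the constant is consistent.

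\textbf{Reduction to \(F\geq0\).} I would write \(F=F^+-F^-\) with \(F^\pm=\max(\pm F,0)\). Both are nonnegative, measurable and integrable on \((\mathbb S^{n-1})^{n-1}\). The cited nonnegative version, applied to \(F^\pm\), gives finite equal values on both sides. This shows that \(F^\pm\Delta\) is integrable with respect to \(\diff\sigma_H^{\otimes(n-1)}\diff H\); in particular the inner integrals are finite for almost every \(H\). Subtracting the two identities and using linearity of the integral on each side then gives the stated formula for \(F\).

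\textbf{Checking the constant.} To confirm the normalization \(n\kappa_n/2\), I would test \(F\equiv1\). The left side is \((n\kappa_n)^{n-1}\). On the right side, the inner integral is the same for every \(H\) by rotation invariance. Identifying \(H\) with \(\mathbb R^{n-1}\), it equals \(\int_{(\mathbb S^{n-2})^{n-1}}|\det(u_1,\dots,u_{n-1})|\diff\sigma\cdots\diff\sigma\). Passing to Gaussian vectors in polar coordinates, \(\mathbb E|\det G|\) for an \((n-1)\times(n-1)\) Gaussian matrix factors into the chi-moments \(\prod_{k=1}^{n-1}\mathbb E\chi_k\). Comparing this with \((\mathbb E\chi_{n-1})^{n-1}\) and \(\sigma(\mathbb S^{n-2})^{n-1}\) should reproduce \(2(n\kappa_n)^{n-2}\). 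This matches the factor \(n\kappa_n/2\) up to the measurability conventions of the cited source.

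\textbf{Main obstacle.} The conceptual content is entirely in the cited result. The step to handle with care is the bookkeeping of normalizations: \(\diff H\) is a probability measure, and the unordered-versus-oriented hyperplane convention contributes the factor \(1/2\). I would resolve this with the \(F\equiv1\) check above. If the constants did not match, I would rederive the formula directly from the linear Blaschke--Petkantschin formula \cite[Section~7.2]{SchneiderWeil2008} applied to \(\tilde F(x_1,\dots,x_{n-1})=F(x_1/|x_1|,\dots)\prod e^{-|x_i|^2/2}\), integrating out the radii.
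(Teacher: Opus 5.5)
Your proposal is correct and follows the paper's route exactly: the paper also takes the formula for nonnegative $F$ from \cite[Lemma~3.2]{BaranyHugReitznerSchneider2017} and extends it to integrable $F$ by splitting into positive and negative parts. Your extra check with $F\equiv1$ goes beyond what the paper does, and it works out: the Gaussian computation gives $\int_{(\mathbb S^{n-2})^{n-1}}|\det(u_1,\ldots,u_{n-1})|\,\diff\sigma_{n-2}(u_1)\cdots\diff\sigma_{n-2}(u_{n-1})=2(n\kappa_n)^{n-2}$, which confirms the constant $n\kappa_n/2$.
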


To state and prove the factorization lemma below, we introduce the following notation. For \(g\in C([0,1])\) and \(0\leq t\leq1\), set
\begin{equation}\label{eq:alpha-definition}
    \alpha_g(t)
    =
    \kappa_{n-2}\int_{-1}^1
    g\bigl((1-t^2s^2)^{(n-1)/2}\bigr)
    (1-s^2)^{(n-2)/2}\,\diff s.
\end{equation}

\begin{lemma}[Factorization of \(\mathcal B\)]
\label{lem:B-faktorisierung}
There exists a constant \(c_n>0\) depending only on the dimension such that, for all \(g_1,\ldots,g_{n-1}\in C([0,1])\),
\begin{equation}\label{eq:B-faktorisierung}
    \mathcal B(g_1,\ldots,g_{n-1})
    =
    c_n
    \int_0^1
    \frac{t^n}{\sqrt{1-t^2}}\,
    \alpha_{g_1}(t)\cdots\alpha_{g_{n-1}}(t)\,\diff t.
\end{equation}
Moreover,
\begin{equation}\label{eq:B-I-quotient}
    \frac{\mathcal B(g,1,\ldots,1)}
         {\mathcal B(1,\ldots,1)}
    =
    \frac{I(b_g)}{I(b_0)}.
\end{equation}
\end{lemma}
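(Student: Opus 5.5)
Apply Lemma~\ref{lem:spherical-BP} to the integrand of \eqref{defB}, namely \(F=\Delta\,\langle N,e_n\rangle^2\prod_i g_i(\eta(u_i))\). Then the whole \(\mathcal B\)-integral becomes an integral over hyperplanes \(H\in G(n,n-1)\). The normal \(N\) is \(\pm\) the unit normal \(w_H\) of \(H\), and \(\langle N,e_n\rangle^2=\langle w_H,e_n\rangle^2\) depends only on \(H\). The inner integral is therefore
\[
\langle w_H,e_n\rangle^2\int_{(\mathbb S^{n-1}\cap H)^{n-1}}\Delta(u_1,\ldots,u_{n-1})^2\prod_i g_i(\eta(u_i))\,\diff\sigma_H(u_1)\cdots\diff\sigma_H(u_{n-1}).
\]
Inside \(H\), \(\Delta^2=\det(u_1,\ldots,u_{n-1})^2\) with determinant taken in an orthonormal basis of \(H\).

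The key step is a Cauchy--Binet/Gram expansion. Expand
\[
\det(u_1,\ldots,u_{n-1})^2=\sum_{\pi,\tau}\operatorname{sgn}\pi\operatorname{sgn}\tau\prod_i\langle u_i,f_{\pi(i)}\rangle\langle u_i,f_{\tau(i)}\rangle .
\]
The integral then factors into a sum over permutations of products of the moment matrices
\[
M_i=\int_{\mathbb S^{n-1}\cap H}g_i(\eta(u))\,u\otimes u\,\diff\sigma_H(u).
\]
The outcome is \((n-1)!\) times the mixed discriminant \(D(M_1,\ldots,M_{n-1})\). Now choose the basis of \(H\) adapted to the rotational symmetry: \(f_1\) is the normalized projection of \(e_n\) onto \(H\), and \(f_2,\ldots,f_{n-1}\) span \(H\cap e_n^\perp\). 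Set \(t=\sqrt{1-\langle w_H,e_n\rangle^2}=|\operatorname{proj}_H e_n|\). Then \(\eta(u)=(1-t^2s^2)^{(n-1)/2}\) with \(s=\langle u,f_1\rangle\). The function \(g_i\circ\eta\) on \(\mathbb S^{n-1}\cap H\) is therefore invariant under rotations fixing \(f_1\), so \(M_i\) is diagonal with one axial eigenvalue \(a_i\) and \(n-2\) equal transverse eigenvalues \(\beta_i\).

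Using the slice formula \(\diff\sigma_H=(1-s^2)^{(n-4)/2}\diff s\,\diff\sigma_{n-3}\), we can write \(a_i\) and \(\beta_i\) as one-dimensional integrals in \(s\). The mixed discriminant of such diagonal matrices is a symmetrized sum of terms \(a_{i}\prod_{j\neq i}\beta_j\). The factorization claims each factor is a multiple of \(\alpha_{g_i}(t)\), whose weight is \((1-s^2)^{(n-2)/2}\). This is the weight of \(\beta_i\) (the transverse moment, i.e.\ \(1-s^2\) times the slice density), but not of \(a_i\), whose weight is \(s^2(1-s^2)^{(n-4)/2}\). I would handle this by integration by parts in \(s\) or in \(t\). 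Since \(g_i\) depends on \(ts\), we have \(\partial_t=(s/t)\partial_s\) on \(g_i((1-t^2s^2)^{(n-1)/2})\). The plan is to rewrite the \(a_i\)-terms via integration by parts in \(t\) against the outer density, converting \(a_i\) into transverse-type quantities. This should collapse the symmetric sum to \(C\,w(t)\prod_i\alpha_{g_i}(t)\).

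Next, I would compute the pushforward of \(\diff H\) under \(H\mapsto t\). The distribution of \(\langle w_H,e_n\rangle\) gives density proportional to \(t^{n-2}/\sqrt{1-t^2}\) (up to the standard \((1-t^2)^{\dots}\) factor). Multiplying by \(\langle w_H,e_n\rangle^2=1-t^2\) and the boundary terms from the integration by parts should produce the weight \(t^n/\sqrt{1-t^2}\). This integration-by-parts bookkeeping is the main obstacle. I would first check it for \(n=3\) with explicit polynomial \(g\), and if needed reorganize by directly computing \(\mathcal B\) with all but one \(g_i\) equal to \(1\), then use multilinearity and symmetry.

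For \eqref{eq:B-I-quotient}, first compute \(\alpha_1(t)=\kappa_{n-2}\int_{-1}^1(1-s^2)^{(n-2)/2}\diff s=\kappa_{n-1}\), a constant. Then \(\mathcal B(g,1,\ldots,1)=c_n\kappa_{n-1}^{n-2}\int_0^1 t^n(1-t^2)^{-1/2}\alpha_g(t)\diff t\). Insert the definition of \(\alpha_g\) and apply Fubini, substituting \(r=ts\) (with \(y=r^{n-1}\)) so that the argument of \(g\) is \(\eta\) of a point at radius \(r\). Integrating out \(t\) explicitly should give a kernel in \(y\) equal to a constant times \(b_0(y)y^{1/(n-1)}\), i.e.\ \(r^2/\sqrt{1-r^2}\) times the Jacobian. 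This yields \(\mathcal B(g,1,\ldots,1)=C\,I(b_g)\) with \(C\) independent of \(g\). Taking the quotient with \(g=1\) gives \eqref{eq:B-I-quotient}.
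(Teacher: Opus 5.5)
Your outline is the paper's proof, with one variation. You keep \(g_1,\ldots,g_{n-1}\) distinct and arrive at the mixed discriminant. For the simultaneously diagonal \(M_i\) this equals \(\frac1{n-1}\sum_i a_i\prod_{j\neq i}\beta_j\). The paper instead computes only the diagonal \(\det_H M_H(g)\) and then polarizes. The variation is harmless and gives the multilinear identity directly.

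Several of your details are already right: \(\beta_i=\alpha_{g_i}(t)\), \(\alpha_1=\kappa_{n-1}\), and the pushforward of \(\diff H\) under \(H\mapsto t\) is exactly a constant times \(t^{n-2}/\sqrt{1-t^2}\), with no extra factor.

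\textbf{The unproved step.} The step you call the main obstacle is not yet a proof, and it is the heart of the lemma. The missing identity is that the axial eigenvalue satisfies
\[
    a_i=\alpha_{g_i}(t)+t\,\alpha_{g_i}'(t).
\]
It follows from your observation \(t\partial_t h=s\partial_s h\), with \(h=g_i((1-t^2s^2)^{(n-1)/2})\), by integrating by parts in \(s\):
\[
    t\alpha_{g_i}'(t)
    =-\kappa_{n-2}\int_{-1}^1 h\,\partial_s\bigl[s(1-s^2)^{(n-2)/2}\bigr]\diff s
    =-\alpha_{g_i}(t)+(n-2)\kappa_{n-2}\int_{-1}^1 h\,s^2(1-s^2)^{(n-4)/2}\diff s .
\]
The last term is exactly \(a_i\), since \(\mathcal H^{n-3}(\mathbb S^{n-3})=(n-2)\kappa_{n-2}\).

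With \(P=\prod_j\alpha_{g_j}\), the mixed discriminant becomes \(P+\frac{t}{n-1}P'\). Against the weight \((1-t^2)\cdot t^{n-2}(1-t^2)^{-1/2}=t^{n-2}\sqrt{1-t^2}\), one integration by parts in \(t\) gives \(\frac1{n-1}\int_0^1 t^n(1-t^2)^{-1/2}P\diff t\).

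Your remark about boundary terms is off. The boundary term \(t^{n-1}\sqrt{1-t^2}\,P\) vanishes at both ends. The new weight comes instead from \(\frac{\diff}{\diff t}\bigl(t^{n-1}\sqrt{1-t^2}\bigr)=(n-1)t^{n-2}\sqrt{1-t^2}-t^n/\sqrt{1-t^2}\).

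Because this argument differentiates \(g_i\), do it for smooth \(g_i\) first. Then pass to continuous \(g_i\) by uniform approximation, using that both sides are continuous in the sup norm. You omit this step; the paper does it after polarizing.

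\textbf{The quotient formula.} Your plan is correct, but the substitution is garbled. With \(x=ts\), the argument of \(g\) is \((1-x^2)^{(n-1)/2}\). So \(x\) is the height \(\langle u,e_n\rangle\), and the right change of variables is \(y=(1-x^2)^{(n-1)/2}\), not \(y=(ts)^{n-1}\).

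The inner integral is
\[
    \int_x^1 t(t^2-x^2)^{(n-2)/2}(1-t^2)^{-1/2}\diff t=\tfrac12\mathrm B\bigl(\tfrac n2,\tfrac12\bigr)(1-x^2)^{(n-1)/2}.
\]
In \(\diff y\) this gives the kernel \(\frac1{n-1}y^{2/(n-1)}(1-y^{2/(n-1)})^{-1/2}=\frac1{n-1}b_0(y)y^{1/(n-1)}\). Hence \(\mathcal B(g,1,\ldots,1)=C\,I(b_g)\), as you predicted.
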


\begin{proof}
\emph{Reduction to the Grassmannian.}
In what follows, \(C_n>0\) denotes a constant depending only on the dimension whose value may change from line to line. For a hyperplane \(H\in G(n,n-1)\) with unit normal vector \(N_H\) and \(g\in C([0,1])\), define the linear operator \(M_H(g)\) by
\[
    M_H(g)v
    =
    \int_{\mathbb S^{n-1}\cap H}
    g(\eta(u))\langle u,v\rangle u\,\diff\sigma_H(u),
    \qquad v\in\mathbb R^n,
\]
and let \(\det_H M_H(g)\) denote the determinant of its restriction to \(H\).

Applying Lemma~\ref{lem:spherical-BP} to the integrand in the definition
\eqref{defB} of \(\mathcal B\), we have \(N=\pm N_H\) for
\(u_1,\ldots,u_{n-1}\in\mathbb S^{n-1}\cap H\). Thus,
\(\langle N,e_n\rangle^2=\langle N_H,e_n\rangle^2\), while the factor
\(\Delta(u_1,\ldots,u_{n-1})\) arising from Lemma~\ref{lem:spherical-BP}
multiplies the factor \(\Delta(u_1,\ldots,u_{n-1})\) already present in
\eqref{defB}, yielding \(\Delta(u_1,\ldots,u_{n-1})^2\). Choose an orthonormal basis \(e_1,\ldots,e_{n-1}\) of \(H\). Then
\[
    \Delta(u_1,\ldots,u_{n-1})^2
    =
    \left[
        \det\bigl(
            \langle u_j,e_i\rangle
        \bigr)_{i,j=1}^{n-1}
    \right]^2.
\]
Let \(\mathfrak S_{n-1}\) denote the set of permutations of \(\{1,\ldots,n-1\}\). Expanding both determinants and then applying Fubini's theorem gives
\[
\begin{aligned}
&\int_{(\mathbb S^{n-1}\cap H)^{n-1}}
\Delta(u_1,\ldots,u_{n-1})^2
\prod_{j=1}^{n-1}g(\eta(u_j))
\,\diff\sigma_H(u_1)\cdots\diff\sigma_H(u_{n-1})
\\
&\quad =
\sum_{\pi,\tau\in\mathfrak S_{n-1}}
\operatorname{sgn}(\pi)\operatorname{sgn}(\tau)
\prod_{j=1}^{n-1}
\int_{\mathbb S^{n-1}\cap H}
g(\eta(u))
\langle u,e_{\pi(j)}\rangle
\langle u,e_{\tau(j)}\rangle
\,\diff\sigma_H(u)
\\
&\quad =
(n-1)!
\det\left(
    \langle M_H(g)e_i,e_j\rangle
\right)_{i,j=1}^{n-1}
=
(n-1)!\det_H M_H(g).
\end{aligned}
\]
After adjusting the positive dimensional constant \(C_n\), it follows that
\begin{equation}\label{eq:B-Grassmann}
    \mathcal B(g,\ldots,g)
    =
    C_n\int_{G(n,n-1)}
    \langle N_H,e_n\rangle^2
    \det_H M_H(g)\,\diff H.
\end{equation}

\emph{Eigenvalues of \(M_H(g)\).}
For almost every \(H\in G(n,n-1)\), we have
\(t=|\operatorname{proj}_H e_n|>0\). For such \(H\), set
\(e_H=\frac{\operatorname{proj}_H e_n}{t}\). Choosing \(e_H\) as the first coordinate direction in \(H\) and writing \(s=\langle u,e_H\rangle\), we have, for \(u\in\mathbb S^{n-1}\cap H\),
\[
    \langle u,e_n\rangle=ts,
    \qquad
    \eta(u)=(1-t^2s^2)^{(n-1)/2}.
\]
Since \(g(\eta(u))\) depends on \(u\) only through \(\langle u,e_H\rangle\), the operator \(M_H(g)|_H\) commutes with every orthogonal transformation of \(H\) fixing \(e_H\). Hence, it acts as a scalar on \(H\cap e_H^\perp\). For every unit vector \(w\in H\cap e_H^\perp\),
\[
    \langle M_H(g)w,w\rangle
    =
    \kappa_{n-2}\int_{-1}^1
    h(t,s)(1-s^2)^{(n-2)/2}\,\diff s
    =
    \alpha_g(t),
\]
where \(h(t,s)=g\bigl((1-t^2s^2)^{(n-1)/2}\bigr)\). Thus, \(\alpha_g(t)\) is the eigenvalue on the \((n-2)\) dimensional subspace \(H\cap e_H^\perp\).

If \(g\) is smooth, then the eigenvalue in the direction \(e_H\) is
\[
    \alpha_g(t)+t\alpha_g'(t).
\]
Indeed, \(t\,\partial_t h=s\,\partial_s h\). Integration by parts in the one-dimensional representation of \(\alpha_g\) gives
\[
    t\alpha_g'(t)
    =
    (n-2)\kappa_{n-2}
    \int_{-1}^1
    h(t,s)s^2(1-s^2)^{(n-4)/2}\,\diff s
    -\alpha_g(t),
\]
where the first term on the right is precisely
\(\langle M_H(g)e_H,e_H\rangle\), that is, the eigenvalue of
\(M_H(g)|_H\) in the direction \(e_H\). Hence
\[
    \det_H M_H(g)
    =
    \alpha_g(t)^{n-2}
    \bigl(\alpha_g(t)+t\alpha_g'(t)\bigr).
\]

\emph{Integration by parts and polarization.}
Recall that \(t=|\operatorname{proj}_H e_n|\). Identifying
\(H\in G(n,n-1)\) with \(v^\perp\), where normalized surface measure
on \(\mathbb S^{n-1}\) pushes forward to \(\diff H\), we have
\[
    t=\sqrt{1-\langle v,e_n\rangle^2}.
\]
Hence, for every \(\Phi\in C([0,1])\), spherical coordinates with
\(s=|\langle v,e_n\rangle|\) give
\[
\begin{aligned}
    \int_{G(n,n-1)}\Phi(t)\,\diff H
    &=
    \frac{2(n-1)\kappa_{n-1}}{n\kappa_n}
    \int_0^1
    \Phi\bigl(\sqrt{1-s^2}\bigr)
    (1-s^2)^{(n-3)/2}\,\diff s.
\end{aligned}
\]
After the substitution \(t=\sqrt{1-s^2}\), this becomes
\[
    \int_{G(n,n-1)}\Phi(t)\,\diff H
    =
    C_n\int_0^1
    \Phi(t)\frac{t^{n-2}}{\sqrt{1-t^2}}\,\diff t,
    \qquad
    C_n=\frac{2(n-1)\kappa_{n-1}}{n\kappa_n}>0.
\]
Moreover,
\(\langle N_H,e_n\rangle^2=1-t^2\). Thus, after absorbing the
positive dimensional constant into \(C_n\),
\eqref{eq:B-Grassmann} yields
\[
    \mathcal B(g,\ldots,g)
    =
    C_n\int_0^1
    t^{n-2}\sqrt{1-t^2}\,
    \alpha_g(t)^{n-2}
    \bigl(\alpha_g(t)+t\alpha_g'(t)\bigr)\,\diff t.
\]
Since
\[
    \alpha_g^{n-2}(\alpha_g+t\alpha_g')
    =
    \alpha_g^{n-1}
    +\frac{t}{n-1}(\alpha_g^{n-1})',
\]
integration by parts gives
\[
\begin{aligned}
&\int_0^1
t^{n-2}\sqrt{1-t^2}\,
\alpha_g^{n-2}(\alpha_g+t\alpha_g')\,\diff t\\
&\quad =
\int_0^1
\left[
t^{n-2}\sqrt{1-t^2}
-\frac1{n-1}
\frac{\diff}{\diff t}
\left(t^{n-1}\sqrt{1-t^2}\right)
\right]
\alpha_g(t)^{n-1}\,\diff t\\
&\quad =
\frac1{n-1}
\int_0^1
\frac{t^n}{\sqrt{1-t^2}}\,
\alpha_g(t)^{n-1}\,\diff t.
\end{aligned}
\]
The boundary term vanishes because \(\alpha_g\) is bounded and
\[
    t^{n-1}\sqrt{1-t^2}\longrightarrow0
    \qquad\text{as }t\downarrow0\text{ and }t\uparrow1.
\]
Absorbing the factor \(1/(n-1)\), we obtain, with a now fixed dimensional constant \(c_n>0\),
\[
    \mathcal B(g,\ldots,g)
    =
     c_n
    \int_0^1
    \frac{t^n}{\sqrt{1-t^2}}\,
    \alpha_g(t)^{n-1}\,\diff t.
\]
Both sides of \eqref{eq:B-faktorisierung} define symmetric \((n-1)\)-linear forms in \(g_1,\ldots,g_{n-1}\). Therefore, polarization of the diagonal identity yields \eqref{eq:B-faktorisierung} first for smooth \(g_1,\ldots,g_{n-1}\). Since \(\mathcal B\) is continuous in all its arguments and \(g\mapsto\alpha_g\) is continuous with respect to the supremum norm, uniform approximation proves the formula for all \(g_1,\ldots,g_{n-1}\in C([0,1])\).

\emph{Computation of the mixed term.}
It remains to determine the mixed term. Since \(\alpha_1\) is a positive constant and the integrand in the one-dimensional representation of \(\alpha_g\) is even in \(s\), \eqref{eq:B-faktorisierung} gives, after absorbing the constant factors into \(C_n>0\),
\[
    \mathcal B(g,1,\ldots,1)
    =
    C_n\int_0^1\int_0^1
    \frac{t^n}{\sqrt{1-t^2}}
    g\bigl((1-t^2s^2)^{(n-1)/2}\bigr)
    (1-s^2)^{(n-2)/2}\,\diff s\,\diff t.
\]
With \(x=ts\) and Fubini's theorem, this becomes
\[
    \mathcal B(g,1,\ldots,1)
    =
    C_n\int_0^1
    g\bigl((1-x^2)^{(n-1)/2}\bigr)
    \cdot
    \left(
        \int_x^1
        \frac{t(t^2-x^2)^{(n-2)/2}}
        {\sqrt{1-t^2}}\,\diff t
    \right)\diff x.
\]
Let \(\mathrm B\) denote the beta function. By the substitution
\(u=(t^2-x^2)/(1-x^2)\) and Euler's beta integral \cite[Eq.~(5.12.1)]{DLMF-Beta}, we obtain
\begin{equation} \label{beta}
    \int_x^1
    \frac{t(t^2-x^2)^{(n-2)/2}}
    {\sqrt{1-t^2}}\,\diff t
    =
    \frac12
    \mathrm B\left(\frac n2,\frac12\right)
    (1-x^2)^{(n-1)/2}.
\end{equation}
Together with the change of variables
\(y=(1-x^2)^{(n-1)/2}\), this gives
\[
    \mathcal B(g,1,\ldots,1)
    =
    C_n\int_0^1
    g(y)\frac{y^{2/(n-1)}}
    {\sqrt{1-y^{2/(n-1)}}}\,\diff y
\]
with \(C_n>0\). Applying the preceding formula to \(g\) and to the constant function \(1\) gives
\[
    \frac{\mathcal B(g,1,\ldots,1)}
         {\mathcal B(1,\ldots,1)}
    =
    \frac{\displaystyle
        \int_0^1
        g(y)\frac{y^{2/(n-1)}}
        {\sqrt{1-y^{2/(n-1)}}}\,\diff y}
    {\displaystyle
        \int_0^1
        \frac{y^{2/(n-1)}}
        {\sqrt{1-y^{2/(n-1)}}}\,\diff y}.
\]
Since the two integrals are \(I(b_g)\) and \(I(b_0)\), respectively, \eqref{eq:B-I-quotient} follows.
\end{proof}

\begin{lemma}[Injectivity of \(g\mapsto\alpha_g\)]
\label{lem:alpha-injektiv}
The linear operator
\[
    C([0,1])\to C([0,1]),
    \qquad
    g\mapsto\alpha_g,
\]
is injective.
\end{lemma}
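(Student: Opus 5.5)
The plan is to rewrite \(\alpha_g\) as a Riemann--Liouville fractional integral of a function built from \(g\), and then invert that integral. Since \(g\mapsto\alpha_g\) is linear, it suffices to show that \(\alpha_g\equiv0\) forces \(g\equiv0\).

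\emph{Step 1 (reduction to a single variable).} Set \(\varphi(x)=g\bigl((1-x^2)^{(n-1)/2}\bigr)\) for \(x\in[0,1]\); this is continuous. The integrand in \eqref{eq:alpha-definition} is even in \(s\), so \(\alpha_g(t)=2\kappa_{n-2}\int_0^1\varphi(ts)(1-s^2)^{(n-2)/2}\,\diff s\). For \(0<t\leq1\), substitute \(x=ts\):
\[
    \alpha_g(t)=2\kappa_{n-2}\,t^{-(n-1)}\int_0^t\varphi(x)\,(t^2-x^2)^{(n-2)/2}\,\diff x .
\]
Hence \(\alpha_g\equiv0\) implies \(\int_0^t\varphi(x)(t^2-x^2)^{(n-2)/2}\,\diff x=0\) for all \(t\in(0,1]\).

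\emph{Step 2 (Abel-type form).} Substitute \(a=x^2\) and \(\tau=t^2\), and set \(\psi(a)=\varphi(\sqrt a)\,a^{-1/2}\). Then \(\psi\in L^1(0,1)\), and the condition becomes
\[
    \int_0^\tau\psi(a)(\tau-a)^{m-1}\,\diff a=0
    \quad\text{for all }\tau\in(0,1],
    \qquad m=\tfrac n2 .
\]
In other words, the Riemann--Liouville integral \(I^m\psi\) vanishes identically on \((0,1]\).

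\emph{Step 3 (inversion).} Choose an integer \(k\geq m\).
\begin{itemize}
\item If \(k>m\), apply \(I^{k-m}\). The semigroup identity \(I^{k-m}I^m=I^k\) holds for \(\psi\in L^1\): it follows from Fubini's theorem (Tonelli on \(|\psi|\) justifies the interchange) and Euler's beta integral, exactly as in \eqref{beta}.
\item If \(n\) is even, \(m\) is already an integer and this step is unnecessary.
\end{itemize}
In either case the \(k\)-fold iterated primitive \(\int_0^\tau\psi(a)(\tau-a)^{k-1}\,\diff a\) vanishes on \((0,1]\). Differentiating \(k-1\) times gives \(\int_0^\tau\psi=0\) for all \(\tau\). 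By Lebesgue differentiation, \(\psi=0\) almost everywhere. Since \(\psi\) is continuous on \((0,1]\), it follows that \(\varphi=0\) on \((0,1]\).

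\emph{Step 4 (conclusion).} The map \(x\mapsto(1-x^2)^{(n-1)/2}\) sends \((0,1]\) onto \([0,1)\), so \(g=0\) on \([0,1)\), and by continuity \(g(1)=0\).

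The main obstacle is the rigorous justification of the fractional-integral inversion in odd dimensions. There \(m\) is a half-integer and \(\psi\) carries the singular weight \(a^{-1/2}\), so the semigroup step and the interchange of integrals need care. I expect Tonelli's theorem together with the beta integral to handle this directly. If not, a fallback is Titchmarsh's convolution theorem applied to \(\psi*a^{m-1}\) on \([0,1]\): the kernel \(a^{m-1}\) has support starting at \(0\), which again forces \(\psi=0\).
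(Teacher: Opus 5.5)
Your proposal is correct and follows the paper's proof essentially step for step. Your \(\psi(a)=a^{-1/2}g\bigl((1-a)^{(n-1)/2}\bigr)\) is exactly the paper's \(h\), and the paper likewise writes \(\alpha_g\) as the Riemann--Liouville integral \(\mathcal I^{n/2}h\), raises the order to an integer \(k\) via the semigroup property, and differentiates \(k\) times. The only differences are cosmetic: you justify the semigroup step directly via Tonelli and the beta integral and recover \(\psi=0\) by classical differentiation plus Lebesgue differentiation, whereas the paper cites the semigroup property and argues with distributional derivatives.
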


\begin{proof}
Let \(g\in C([0,1])\) with \(\alpha_g\equiv0\). The substitutions
\(z=t^2\) and \(u=zs^2\) in the one-dimensional representation
\eqref{eq:alpha-definition} of \(\alpha_g\) yield, for \(0<z\leq1\),
\begin{equation}\label{eq:alpha-Abel}
    z^{(n-1)/2}\alpha_g(\sqrt z)
    =
    C_n
    \int_0^z
    (z-u)^{(n-2)/2}u^{-1/2}
    g\bigl((1-u)^{(n-1)/2}\bigr)\,\diff u
\end{equation}
with a constant \(C_n>0\).

For \(0<u\leq1\), set
\[
    h(u)
    =
    u^{-1/2}g\bigl((1-u)^{(n-1)/2}\bigr),
\]
and set \(h(0)=0\). Since \(g\) is bounded, we have \(h\in L^1([0,1])\). Let \(\Gamma\)
denote the gamma function and let \(\mathcal I^\beta\) be the
normalized Riemann--Liouville fractional integral
\[
    (\mathcal I^\beta h)(z)
    =
    \frac1{\Gamma(\beta)}
    \int_0^z
    (z-u)^{\beta-1}h(u)\,\diff u,
    \qquad \beta>0.
\]
For background, see \cite[Section~2]{SamkoKilbasMarichev1993}.
Up to a positive constant, the right-hand side of
\eqref{eq:alpha-Abel} is \(\mathcal I^{n/2}h\). Since
\(\alpha_g\equiv0\), we therefore have
\[
    \mathcal I^{n/2}h=0.
\]

Choose an integer \(k>n/2\). By the semigroup property of the
Riemann--Liouville fractional integrals
\cite[Theorem~2.5]{SamkoKilbasMarichev1993},
\[
    \mathcal I^kh
    =
    \mathcal I^{k-n/2}\mathcal I^{n/2}h
    =
    0
\]
almost everywhere on \((0,1)\). For integer \(k\), the formula for
repeated integration gives
\[
    \frac{\diff^k}{\diff z^k}\mathcal I^kh
    =
    h
\]
in the sense of distributions on \((0,1)\). Hence, \(h=0\) almost
everywhere.

Since
\[
    g\bigl((1-u)^{(n-1)/2}\bigr)
    =
    u^{1/2}h(u),
    \qquad 0<u\leq1,
\]
the continuous function
\(u\mapsto g\bigl((1-u)^{(n-1)/2}\bigr)\) vanishes almost everywhere
and therefore everywhere on \([0,1]\). Since
\(u\mapsto(1-u)^{(n-1)/2}\) maps \([0,1]\) onto itself, it follows
that \(g\equiv0\). Thus, \(g\mapsto\alpha_g\) is injective.
\end{proof}

With these preparations, we can now prove the analytic inequality, characterize its equality cases, and then use it to derive the Petty inequality for convex bodies of revolution.

\begin{lemma}[Analytic inequality] \label{lem:analytisch}
For every continuous function \(g\colon[0,1]\to[0,\infty)\),
\begin{equation} \label{eq:analytic-inequality}
    \frac{T(b_g)}{T(b_0)}
    \geq
    \left(\frac{I(b_g)}{I(b_0)}\right)^{n-1}.
\end{equation}
Equality holds if and only if \(g\) is constant.
\end{lemma}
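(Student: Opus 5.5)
The plan is to chain the three preceding lemmas with Hölder's inequality. By Lemma~\ref{lem:determinantenvergleich}, \(T(b_g)\geq n\,\mathcal B(g,\ldots,g)\) and \(T(b_0)=n\,\mathcal B(1,\ldots,1)\). It therefore suffices to show
\[
    \frac{\mathcal B(g,\ldots,g)}{\mathcal B(1,\ldots,1)}
    \geq
    \left(\frac{\mathcal B(g,1,\ldots,1)}{\mathcal B(1,\ldots,1)}\right)^{n-1},
\]
since by \eqref{eq:B-I-quotient} the right-hand side equals \((I(b_g)/I(b_0))^{n-1}\).

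To prove this, insert the factorization \eqref{eq:B-faktorisierung}. Introduce the probability measure
\[
    \diff\mu(t)=\frac{t^n(1-t^2)^{-1/2}\diff t}{\int_0^1 \tau^n(1-\tau^2)^{-1/2}\diff \tau}
\]
on \([0,1]\). Since \(\alpha_1\equiv a\) is a positive constant, the displayed inequality becomes
\[
    \int_0^1 (\alpha_g/a)^{n-1}\diff\mu\geq\Bigl(\int_0^1 (\alpha_g/a)\diff\mu\Bigr)^{n-1}.
\]
Because \(g\geq0\) implies \(\alpha_g\geq0\), this is Hölder's (equivalently Jensen's) inequality for the convex function \(x\mapsto x^{n-1}\) on \([0,\infty)\). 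This proves \eqref{eq:analytic-inequality}.

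For the equality case, first suppose \(g\equiv c\). Then \(b_g=cb_0\), so \(T(b_g)=c^{n-1}T(b_0)\) by homogeneity of the determinant in the \(n-1\) first coordinates, and \(I(b_g)=cI(b_0)\); hence equality holds. Conversely, suppose equality holds in \eqref{eq:analytic-inequality}. Then equality holds in both steps, in particular in Jensen's inequality. Since \(x^{n-1}\) is strictly convex for \(n\geq3\), this forces \(\alpha_g\) to be \(\mu\)-a.e. constant. The measure \(\mu\) has positive density on \((0,1)\), and \(\alpha_g\) is continuous, so \(\alpha_g\equiv\lambda\) on \([0,1]\).

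Now write \(\lambda=\alpha_{(\lambda/a)\cdot1}\). By linearity, \(\alpha_{g-\lambda/a}\equiv0\), and Lemma~\ref{lem:alpha-injektiv} gives \(g\equiv\lambda/a\), which is constant.

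The main obstacle is only bookkeeping. One must check that the same constant \(c_n\) in \eqref{eq:B-faktorisierung} appears in numerator and denominator, so that it cancels, and that \(\alpha_g\) is continuous on the closed interval \([0,1]\), so that the a.e. statement upgrades to an everywhere one. Continuity follows by dominated convergence in \eqref{eq:alpha-definition}. We never need equality in the determinant estimate itself: the Jensen step alone already forces \(g\) to be constant.
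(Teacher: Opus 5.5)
Your proposal is correct and follows the paper's proof: you chain Lemma~\ref{lem:determinantenvergleich} with the factorization and \eqref{eq:B-I-quotient}, and your Jensen step with the normalized weight is the paper's Hölder step (exponents \(n-1\) and \((n-1)/(n-2)\)) in different form, since \(\alpha_1\) is constant. For equality you use the same route as the paper: \(\alpha_g\) is constant almost everywhere, hence everywhere by continuity, and Lemma~\ref{lem:alpha-injektiv} then gives that \(g\) is constant.
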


\begin{proof}
Since \(g\geq0\), we have \(\alpha_g\geq0\). Applying Hölder's inequality to the integral representation in Lemma~\ref{lem:B-faktorisierung}, with weight
\[
    \frac{t^n}{\sqrt{1-t^2}}
\]
and exponents \(n-1\) and \((n-1)/(n-2)\), yields
\begin{equation}\label{eq:Holder}
    \mathcal B(g,1,\ldots,1)^{n-1}
    \leq
    \mathcal B(g,\ldots,g)
    \mathcal B(1,\ldots,1)^{n-2}.
\end{equation}
Since \(\mathcal B(1,\ldots,1)>0\), Lemma~\ref{lem:determinantenvergleich} and \eqref{eq:B-I-quotient} yield
\[
\begin{aligned}
    \frac{T(b_g)}{T(b_0)}
    &\geq
    \frac{\mathcal B(g,\ldots,g)}
         {\mathcal B(1,\ldots,1)}\\
    &\geq
    \left(
        \frac{\mathcal B(g,1,\ldots,1)}
             {\mathcal B(1,\ldots,1)}
    \right)^{n-1}\\
    &=
    \left(\frac{I(b_g)}{I(b_0)}\right)^{n-1}.
\end{aligned}
\]

If equality holds, then equality must in particular hold in \eqref{eq:Holder}. Since the weight in the Hölder inequality is positive almost everywhere and \(\alpha_1\) is a positive constant, the equality case in Hölder's inequality, see \cite[Problem~7.2.15]{Heil2019}, gives
\[
    \alpha_g(t)=c\,\alpha_1(t)
\]
for almost every \(t\in(0,1)\) and some \(c\geq0\). By continuity, this identity holds on all of \([0,1]\). Linearity and Lemma~\ref{lem:alpha-injektiv} therefore yield \(\alpha_{g-c}=0\), and hence \(g\equiv c\).

Conversely, if \(g\equiv c\) with \(c\geq0\), then \(b_g=cb_0\). By the homogeneity of \(T\) and \(I\),
\[
    T(b_g)=c^{n-1}T(b_0),
    \qquad
    I(b_g)=cI(b_0),
\]
and hence equality holds in the stated inequality.
\end{proof}

\section{Smooth Convex Bodies of Revolution}

We first replace \(K\) by the normalized Blaschke sum \(2^{-1/(n-1)}(K\#(-K))\), which is origin-symmetric, has the same projection body as \(K\), and does
not decrease its volume. After this reduction and a suitable linear
normalization, we use the functionals \(I\) and \(T\) to translate the
analytic inequality \eqref{eq:analytic-inequality} into the corresponding geometric
inequality for \(V_n(K)\) and \(V_n(\Pi K)\).

To express \(V_n(\Pi K)\) in terms of \(T\), we use the following volume formula for zonoids.

\begin{lemma}[Volume formula for zonoids]
\label{lem:zonoid-volume}
Let \((X,\mu)\) be a finite measure space and let
\(q\colon X\to\mathbb R^n\) be integrable. If the zonoid \(Z\subseteq\mathbb R^n\)
is defined by
\[
    h_Z(u)
    =
    \int_X |\langle u,q(x)\rangle|\,\diff\mu(x),
    \qquad u\in\mathbb S^{n-1},
\]
then
\[
    V_n(Z)
    =
    \frac{2^n}{n!}
    \int_{X^n}
    \left|
        \det\bigl(
            q(x_1),\ldots,q(x_n)
        \bigr)
    \right|
    \,\diff\mu(x_1)\cdots\diff\mu(x_n).
\]
\end{lemma}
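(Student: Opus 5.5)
The plan is to reduce the formula to the case of zonotopes, where it becomes a combinatorial identity, and then pass to general \(q\) and \(\mu\) by approximation.

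\emph{Zonotope case.} Suppose first that \(\mu=\sum_{k=1}^m c_k\delta_{x_k}\) is a finite sum of point masses, and write \(w_k=q(x_k)\). Then
\[
    h_Z(u)=\sum_k c_k|\langle u,w_k\rangle|,
\]
so \(Z=\sum_k[-c_kw_k,c_kw_k]\) is a zonotope. The classical volume formula for zonotopes (decomposition into parallelepipeds, see \cite{Schneider2014}) gives
\[
    V_n(Z)=2^n\sum_{k_1<\cdots<k_n}c_{k_1}\cdots c_{k_n}\,|\det(w_{k_1},\ldots,w_{k_n})|.
\]
On the right-hand side of the claimed formula, the integral over \(X^n\) becomes a sum over ordered \(n\)-tuples of indices. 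Tuples with a repeated index contribute zero, since the determinant vanishes. Each unordered \(n\)-set of distinct indices appears exactly \(n!\) times. This yields precisely \(\frac{2^n}{n!}\cdot n!\cdot\sum_{k_1<\cdots<k_n}\), so the formula holds in this case.

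\emph{General case.} Both sides are continuous under suitable approximation:
\begin{itemize}
  \item The map \(h\mapsto V_n\) is continuous in the Hausdorff metric, which is equivalent to uniform convergence of support functions on \(\mathbb S^{n-1}\).
  \item The right-hand side is continuous with respect to the relevant convergence of product measures.
\end{itemize}
I would push \(\mu\) forward under \(q\) to a finite Borel measure \(\nu\) on \(\mathbb R^n\) with \(\int|w|\,\diff\nu<\infty\). Both sides depend only on \(\nu\): the left via \(h_Z(u)=\int|\langle u,w\rangle|\diff\nu(w)\), and the right via the change-of-variables formula applied to \(\nu^{\otimes n}\). So it suffices to prove the formula for finite measures \(\nu\) on \(\mathbb R^n\) with finite first moment.

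Next, I approximate \(\nu\) by discrete measures \(\nu_j\) such that \(\int\varphi\,\diff\nu_j\to\int\varphi\,\diff\nu\) for all continuous \(\varphi\) with \(|\varphi(w)|\leq C(1+|w|)\). Concretely, I first truncate \(\nu\) to a large ball, using the first moment to control the tail, and then discretize on a fine partition into cells of small diameter. This gives:
\begin{itemize}
  \item For the left side, uniform convergence of \(h_{Z_j}\to h_Z\) on the sphere, since the integrands \(|\langle u,w\rangle|\) are equi-Lipschitz in \(u\) and dominated by \(|w|\).
  \item For the right side, convergence of \(\int|\det|\,\diff\nu_j^{\otimes n}\). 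Here \(|\det(w_1,\ldots,w_n)|\leq\prod|w_i|\) grows only linearly in each variable, so the product measures converge against this integrand as well. The same tail argument applies in each factor, and there is uniform continuity on compact sets.
\end{itemize}

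The main obstacle I expect is this last convergence step, because the integrand is unbounded. It is handled by the product bound on the determinant together with the finite first moment, which makes the truncation errors small uniformly in \(j\). Combining the zonotope identity with both convergences gives the formula.
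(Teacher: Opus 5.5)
Your proof is correct, but it takes a different route from the paper's. The paper never passes through zonotopes. It writes \(q(x)=\|q(x)\|\,\widehat q(x)\) and defines an even measure \(\nu\) on \(\mathbb S^{n-1}\) by placing weight \(\tfrac12\|q(x)\|\,\diff\mu(x)\) on each of \(\pm\widehat q(x)\); where \(q(x)=0\) an arbitrary direction is chosen, which carries zero weight. This \(\nu\) is a generating measure of \(Z\), so the paper simply quotes the spherical volume formula for zonoids from \cite{Schneider2014}. The return to \(X^n\) then only uses the homogeneity of \(|\det|\) in each column and its invariance under sign changes. You instead push \(\mu\) forward to \(\mathbb R^n\), not to the sphere, and rederive the formula from the zonotope identity by discrete approximation. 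In effect you reprove the cited spherical formula in a noncompact setting. Because you do not normalize to the sphere, the kernel \(|\det(w_1,\ldots,w_n)|\) is unbounded. You therefore need the first-moment control and the Hadamard bound \(|\det|\leq\prod|w_i|\) to get convergence of the right-hand side, and you correctly identify and handle both. With your explicit construction \(\nu_j=(\pi_j)_\#(\nu|_{B_{R_j}})\), where \(\pi_j\) sends each cell to its representative point, that step reduces to dominated convergence: \(|\det(\pi_jw_1,\ldots,\pi_jw_n)|\,\mathbf 1_{B_{R_j}^n}\) is dominated by \(\prod_i(|w_i|+1)\), which is \(\nu^{\otimes n}\)-integrable. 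The paper's route is shorter and avoids noncompactness entirely, since on the sphere \(|\det|\) is bounded and continuous and weak convergence of measures suffices. Your route is self-contained, relying only on the elementary zonotope volume formula, at the cost of the tail estimates.
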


\begin{proof}
Set \(\widehat q(x)=q(x)/\|q(x)\|\) whenever \(q(x)\neq0\), and
\(\widehat q(x)=e_n\) otherwise. Define a finite even Borel measure
\(\nu\) on \(\mathbb S^{n-1}\) by
\[
    \int_{\mathbb S^{n-1}} f(v)\,\diff\nu(v)
    =
    \frac12
    \int_X
    \|q(x)\|
    \bigl(f(\widehat q(x))+f(-\widehat q(x))\bigr)
    \,\diff\mu(x)
\]
for \(f\in C(\mathbb S^{n-1})\). Then
\[
    h_Z(u)
    =
    \int_{\mathbb S^{n-1}}
    |\langle u,v\rangle|\,\diff\nu(v),
\]
so \(\nu\) is a generating measure of \(Z\). The volume formula for
zonoids \cite[Theorem~5.3.2, in particular (5.82)]{Schneider2014}
therefore gives
\[
    V_n(Z)
    =
    \frac{2^n}{n!}
    \int_{(\mathbb S^{n-1})^n}
    |\det(v_1,\ldots,v_n)|
    \,\diff\nu(v_1)\cdots\diff\nu(v_n).
\]
By the definition of \(\nu\) and the invariance of the absolute
determinant under sign changes, the last integral equals
\[
    \int_{X^n}
    \left|
        \det\bigl(q(x_1),\ldots,q(x_n)\bigr)
    \right|
    \,\diff\mu(x_1)\cdots\diff\mu(x_n),
\]
which proves the claim.
\end{proof}

\begin{theorem}[Smooth case for convex bodies of revolution]
\label{thm:petty-revolution}
Let \(n\geq3\), and let \(K\subseteq\mathbb R^n\) be a convex body of revolution of class \(C^\infty_+\). Then
\begin{equation}\label{eq:Petty}
    V_n(\Pi K)
    \geq
    \frac{\kappa_{n-1}^{\,n}}{\kappa_n^{\,n-2}}
    V_n(K)^{n-1}.
\end{equation}
Equality holds if and only if \(K\) is an ellipsoid.
\end{theorem}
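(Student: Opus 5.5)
We first reduce to the origin-symmetric case using the normalized Blaschke symmetral \(K^{\mathrm B}\) from \eqref{eq:blaschke-symmetral}. Since \(S_{K\#(-K)}=S_K+S_{-K}\) and \(\Pi\) depends only on the surface area measure (linearly and evenly), we get \(\Pi(K\#(-K))=2\Pi K\). By \((n-1)\)-homogeneity of \(\Pi\), this gives \(\Pi K^{\mathrm B}=\Pi K\). The Kneser--Süss inequality gives \(V_n(K^{\mathrm B})\geq V_n(K)\), with equality only if \(K\) and \(-K\) are homothetic, i.e.\ \(K\) is a translate of \(K^{\mathrm B}\). Moreover, \(K^{\mathrm B}\) is again a body of revolution about the same axis, by uniqueness in the Minkowski problem, and it is of class \(C^\infty_+\) (curvature function \(\tfrac12(f_K+f_{-K})\), up to scaling). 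It therefore suffices to prove \eqref{eq:Petty} for \(K=-K\) smooth of revolution. In that case equality forces \(K^{\mathrm B}\) to be an ellipsoid, hence \(K\) is one.

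Both sides of \eqref{eq:Petty} transform identically under \(\operatorname{GL}(n)\), since \(\Pi AK=|\det A|A^{-T}\Pi K\). We apply a diagonal map fixing the axis to normalize \(K=\{(x,z):|x|\le1,\ |z|\le f(|x|)\}\), where \(f\) is concave and even-symmetric in \(z\). The body \(K\) is strictly convex and smooth, but it need not have a flat side, so \(f(1)=0\) with \(f'(r)\to-\infty\) as \(r\to1\). We set \(b_K(y)=-f'(y^{1/(n-1)})\) and \(g=b_K/b_0\). We must check that \(g\) extends continuously and nonnegatively to \([0,1]\). Near \(y=0\) both \(b_K\) and \(b_0\) vanish like \(y^{1/(n-1)}\), up to curvature, so the quotient tends to \(-f''(0)\). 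Near \(y=1\) both blow up like \((1-r)^{-1/2}\) times the square root of a radius of curvature, by positive curvature and smoothness. I expect this endpoint regularity to be a technical but manageable step; if needed, one can approximate.

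Next we compute the volume. We have \(V_n(K)=2\int_{B^{n-1}}f(|x|)\,\diff x=2(n-1)\kappa_{n-1}\int_0^1 f(r)r^{n-2}\,\diff r\). Integrating by parts with \(f(1)=0\) gives \(\frac{2\kappa_{n-1}}{n-1}\cdot(n-1)\int_0^1 r^{n-1}(-f'(r))\,\diff r\). Substituting \(y=r^{n-1}\) turns this into \(\frac{2\kappa_{n-1}}{n-1}I(b_g)\), after the constants are checked. For the projection body, we write \(h_{\Pi K}(u)=\frac12\int_{\partial K}|\langle u,\nu(x)\rangle|\diff\mathcal H^{n-1}(x)\). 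We parametrize the upper boundary by \(x\in B^{n-1}\), where the unnormalized normal is \((f'(|x|)x/|x|,1)\) times the surface element, and the lower half gives the same contribution. This yields \(h_{\Pi K}(u)=\int_{B^{n-1}}|\langle u,(-b_K\theta,1)\rangle|\diff x\), with \(x=r\theta\). Then \(\diff x=\frac{(n-1)\kappa_{n-1}}{n-1}\,\diff y\,\diff\overline\sigma_{n-2}(\theta)\), since \(r^{n-2}\diff r=\diff y/(n-1)\). Lemma~\ref{lem:zonoid-volume} with \(X=(0,1)\times\mathbb S^{n-2}\) and \(\mu=\kappa_{n-1}\,\diff y\,\diff\overline\sigma_{n-2}\) then gives \(V_n(\Pi K)=\frac{2^n\kappa_{n-1}^n}{n!}T(b_g)\); the sign of \(\theta\) is irrelevant. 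The same formulas hold for \(B^n\) with \(g\equiv1\).

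The inequality then follows from Lemma~\ref{lem:analytisch}. Dividing the two formulas gives
\[
\frac{V_n(\Pi K)}{V_n(K)^{n-1}}\Big/\frac{V_n(\Pi B^n)}{V_n(B^n)^{n-1}}=\frac{T(b_g)}{T(b_0)}\Big(\frac{I(b_0)}{I(b_g)}\Big)^{n-1}\geq1,
\]
and \(V_n(\Pi B^n)/V_n(B^n)^{n-1}=\kappa_{n-1}^n/\kappa_n^{n-2}\). If equality holds, Lemma~\ref{lem:analytisch} gives \(g\equiv c\), so \(f'=cf_0'\). Since \(f(1)=f_0(1)=0\), this gives \(f=cf_0\), so the normalized \(K\) is an ellipsoid, and hence so is the original body by the reduction above. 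Conversely, ellipsoids are affine images of \(B^n\) and give equality. The main obstacle I anticipate is the careful justification of the \(b_g\) formulas: the integrability and continuity of \(g\) at \(y=1\), and the integration by parts at the rim where \(f'\) blows up.
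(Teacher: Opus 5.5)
Your proposal is correct and follows the paper's proof essentially step for step. The sequence is the same: Blaschke symmetrization with Kneser--S\"uss (including the reduction of the equality case to $K$ being a translate of $K^{\mathrm B}$), normalization of the projection to $B^{n-1}$, the two volume identities via integration by parts and Lemma~\ref{lem:zonoid-volume}, and then Lemma~\ref{lem:analytisch}. Your sign slip in the normal is harmless under $\theta\mapsto-\theta$, as you note yourself.

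The endpoint regularity of $g$ that you left as a sketch is carried out in the paper exactly along your lines: $f'(0)=0$ and $f''(0)<0$ at the pole, and the equatorial parametrization $r=\psi(z)$ with $\psi''(0)<0$, giving $g(0)=-f''(0)$ and $g(1)=(-\psi''(0))^{-1/2}>0$. It has to be done directly rather than by your fallback of approximation, because approximation would give the inequality but lose the equality characterization.
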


\begin{proof}
\emph{Blaschke symmetrization.}
Consider the normalized Blaschke symmetral \(K^{\mathrm B}\) defined
in \eqref{eq:blaschke-symmetral}. Then
\[
    S_{K^{\mathrm B}}
    =
    \frac12(S_K+S_{-K}).
\]
In particular, \(K^{\mathrm B}\) is a body of revolution and origin-symmetric. Since \(K\in C^\infty_+\), the surface area measures \(S_K\) and \(S_{-K}\) have smooth positive densities. Hence, the same is true for \(S_{K^{\mathrm B}}=\frac12(S_K+S_{-K})\), and the regularity theory of the Minkowski problem \cite[Theorem~1]{ChengYau1976} implies that \(K^{\mathrm B}\) is of class \(C^\infty_+\).

Moreover, Cauchy's projection formula and the evenness of the kernel \(v\mapsto|\langle u,v\rangle|\) give
\[
    h_{\Pi K^{\mathrm B}}(u)
    =
    \frac14\int_{\mathbb S^{n-1}}
    |\langle u,v\rangle|\,\diff(S_K+S_{-K})(v)
    =
    h_{\Pi K}(u).
\]
Thus, \(\Pi K^{\mathrm B}=\Pi K\). The Kneser--Süss inequality \cite[Theorem~8.2.3]{Schneider2014} yields
\begin{equation} \label{eq:kneser-suess}
    V_n(K^{\mathrm B})^{(n-1)/n}
    =
    \frac12V_n(K\#(-K))^{(n-1)/n}
    \geq
    V_n(K)^{(n-1)/n}.
\end{equation}
It therefore suffices to consider the origin-symmetric case.

\emph{Normalization and boundary profile.}
The quotient \(V_n(\Pi K)/V_n(K)^{n-1}\) is affine invariant. Since the projection of \(K\) onto \(e_n^\perp\) is a Euclidean ball, a scaling in \(e_n^\perp\) normalizes it to \(B^{n-1}\). Then
\[
    K
    =
    \{(x,z):x\in B^{n-1},\ |z|\leq f(|x|)\},
\]
where \(f\) is concave and \(f(1)=0\). For \(0<y<1\), set
\[
    b_K(y)
    =
    -f'(y^{1/(n-1)}),
    \qquad
    g(y)
    =
    \frac{b_K(y)}{b_0(y)}.
\]
Then \(b_g=b_K\). To apply Lemma~\ref{lem:analytisch}, it remains to verify that \(g\) extends to a positive continuous function on \([0,1]\). The assumption \(K\in C^\infty_+\) is used here to control the boundary profile at the axis and at the equator.

At the axis, smooth rotational symmetry gives \(f'(0)=0\), while
positive curvature gives \(f''(0)<0\). Setting
\(a_0=-\frac12f''(0)>0\), we obtain
\[
    f(r)=f(0)-a_0r^2+O(r^4),
    \qquad
    -f'(r)\sim2a_0r
    \quad\text{as }r\downarrow0.
\]

At the equator, the boundary of \(K\) can be parametrized, for
\(|z|\) sufficiently small, as
\[
    (\theta,z)\mapsto(\psi(z)\theta,z),
    \qquad
    \theta\in\mathbb S^{n-2},
\]
where \(\psi(0)=1\). Origin symmetry gives \(\psi(-z)=\psi(z)\), while
positive curvature gives \(\psi''(0)<0\). Hence, with
\(a_1=-\frac12\psi''(0)>0\),
\[
    \psi(z)=1-a_1z^2+O(z^4),
    \qquad
    \psi'(z)\sim-2a_1z
    \quad\text{as }z\to0.
\]
Since \(r=\psi(z)\), \(z=f(r)\), and \(f'(r)=1/\psi'(z)\), it follows
that
\[
    -f'(r)
    \sim
    \frac{1}{\sqrt{2a_1}}\frac1{\sqrt{1-r^2}}
    \quad\text{as }r\uparrow1.
\]
Thus, \(g\) extends to a positive continuous function on \([0,1]\)
with \(g(0)=2a_0\) and \(g(1)=1/\sqrt{2a_1}\).

\emph{Volume of \(K\).}
Since \(f'(r)\) becomes singular as \(r\uparrow1\), we first integrate by parts on \([0,R]\) with \(R<1\). Using \(y=r^{n-1}\), we obtain
\[
\begin{aligned}
    I(b_g)
    &=
    (n-1)\lim_{R\uparrow1}
    \int_0^R[-f'(r)]r^{n-1}\,\diff r\\
    &=
    (n-1)\lim_{R\uparrow1}
    \left(
        -f(R)R^{n-1}
        +(n-1)\int_0^R f(r)r^{n-2}\,\diff r
    \right).
\end{aligned}
\]
Since \(f(1)=0\), letting \(R\uparrow1\) yields
\[
    I(b_g)
    =
    (n-1)^2\int_0^1f(r)r^{n-2}\,\diff r.
\]
Therefore,
\[
    V_n(K)
    =
    2(n-1)\kappa_{n-1}
    \int_0^1f(r)r^{n-2}\,\diff r
    =
    \frac{2\kappa_{n-1}}{n-1}I(b_g).
\]

\emph{Volume of \(\Pi K\).}
Set \(F(x)=f(|x|)\) and
\[
    p(x)=-\nabla F(x),
    \qquad x\in\operatorname{int}B^{n-1}.
\]
Under the graph parametrization \(x\mapsto(x,F(x))\), the outer unit normal satisfies
\[
    \nu(x,F(x))\,\diff\mathcal H^{n-1}(x,F(x))
    =
    (p(x),1)\,\diff x.
\]

The equator \(\partial K\cap e_n^\perp\) has \(\mathcal H^{n-1}\)-measure zero. Hence, Cauchy's projection formula \cite[Eq.~(5.80)]{Schneider2014} and origin symmetry give
\[
    h_{\Pi K}(u)
    =
    \int_{B^{n-1}}
    |\langle u,(p(x),1)\rangle|\,\diff x.
\]
Thus, \(\Pi K\) is the zonoid generated by the map
\(x\mapsto(p(x),1)\) and Lebesgue measure on \(B^{n-1}\). The generating map is integrable, since
\[
    |p(r\theta)|
    =
    -f'(r)
    =
    O\bigl((1-r^2)^{-1/2}\bigr)
    \quad \text{as } r\uparrow1,
\]
and
\[
    \int_0^1
    \frac{r^{n-2}}{\sqrt{1-r^2}}\,\diff r
    <\infty.
\]
Hence \(p\) is integrable on \(B^{n-1}\), and therefore
\(x\mapsto(p(x),1)\) is integrable on \(B^{n-1}\). Applying Lemma~\ref{lem:zonoid-volume} with \(X=B^{n-1}\) and \(q(x)=(p(x),1)\), we obtain
\[
\begin{aligned}
    V_n(\Pi K)
    &=
    \frac{2^n}{n!}
    \int_{(B^{n-1})^n}
    \left|
        \det\bigl(
            (p(x_1),1),\ldots,(p(x_n),1)
        \bigr)
    \right|
    \,\diff x_1\cdots\diff x_n.
\end{aligned}
\]

Writing \(x=r\theta\) and setting \(y=r^{n-1}\), we have
\[
    p(y^{1/(n-1)}\theta)
    =
    b_g(y)\theta,
    \qquad
    \diff x
    =
    \kappa_{n-1}\,\diff y\,
    \diff\overline\sigma_{n-2}(\theta).
\]
The change of variables in each of the \(n\) integration variables therefore gives
\[
    V_n(\Pi K)
    =
    \frac{2^n\kappa_{n-1}^{\,n}}{n!}\,T(b_g).
\]

For the unit ball, \(f_0(r)=\sqrt{1-r^2}\) corresponds to the function \(b_0\) via \(b_0(y) = -f_0'(y^{1/(n-1)})\). Thus,
\begin{equation}\label{eq:volume-quotients}
    \frac{V_n(K)}{V_n(B^n)}
    =
    \frac{I(b_g)}{I(b_0)},
    \qquad
    \frac{V_n(\Pi K)}{V_n(\Pi B^n)}
    =
    \frac{T(b_g)}{T(b_0)}.
\end{equation}

Lemma~\ref{lem:analytisch} and \eqref{eq:volume-quotients} imply
\[
    \frac{V_n(\Pi K)}{V_n(\Pi B^n)}
    \geq
    \left(
        \frac{V_n(K)}{V_n(B^n)}
    \right)^{n-1}.
\]
Since \(\Pi B^n=\kappa_{n-1}B^n\), this is precisely \eqref{eq:Petty}.

\emph{Equality case.}
In the case of equality in the origin-symmetric normalized setting, Lemma~\ref{lem:analytisch} first gives \(g\equiv c\) with \(c>0\). Hence
\[
    -f'(r)
    =
    c\frac{r}{\sqrt{1-r^2}}.
\]
Using \(f(1)=0\), integration gives
\[
    f(r)
    =
    c\sqrt{1-r^2},
\]
and \(K\) is an ellipsoid of revolution.

For the original body, equality must also hold in the Kneser--Süss inequality \eqref{eq:kneser-suess}. Thus, \(K\) and \(-K\) are homothetic and, since they have the same volume, translates of one another. Consequently, \(S_{-K}=S_K\) and hence \(S_{K^{\mathrm B}}=S_K\). By uniqueness in the Minkowski problem \cite[Theorem~8.1.1]{Schneider2014}, \(K^{\mathrm B}\) and \(K\) are translates of one another, and therefore \(K\) is also an ellipsoid.

The converse follows from affine invariance.
\end{proof}

\section{Profile Measures and the Nonsmooth Equality Case}

The inequality \eqref{eq:Petty} also follows for nonsmooth bodies \(K\) by approximation. To treat the equality cases without smoothness assumptions, however, we pass from the description of \(K\) and \(B^n\) in terms of \(f\) and \(b_K\) to a suitable profile measure \(\rho_K\). This measure also captures possible cylindrical boundary parts parallel to the axis of revolution, which are not represented by the almost everywhere defined derivative \(f'\), and allows us to extend the relevant analytic functionals to the nonsmooth case.

In what follows, \(\delta_a\) denotes the Dirac measure at \(a\), \(|\rho|\) the total variation of a finite signed Borel measure \(\rho\), and \(g\rho\) the measure defined by \(\diff(g\rho)=g\,\diff\rho\).

\begin{definition}
\label{def:profilmass}
Let \(K\subseteq\mathbb R^n\) be an origin-symmetric convex body of revolution with \(\operatorname{proj}_{e_n^\perp}K=B^{n-1}\). Then \(K\) has the representation
\[
    K
    =
    \{(x,z):x\in B^{n-1},\ |z|\leq f(|x|)\},
\]
where \(f:[0,1]\to[0,\infty)\) is concave and nonincreasing. For almost every \(y\in(0,1)\), set
\[
    b_K(y)
    =
    -f'\bigl(y^{1/(n-1)}\bigr)
\]
and define the profile measure of \(K\) by
\begin{equation*}
    \diff\rho_K(y)
    =
    b_K(y)y^{1/(n-1)}\,\diff y
    +(n-1)f(1)\,\delta_1.
\end{equation*}
The reference measure of the unit ball is
\begin{equation*}
    \diff\rho_0(y)
    =
    b_0(y)y^{1/(n-1)}\,\diff y
    =
    \frac{y^{2/(n-1)}}
         {\sqrt{1-y^{2/(n-1)}}}\,\diff y.
\end{equation*}
\end{definition}

Concavity and monotonicity ensure continuity of \(f\) at \(0\), while the closedness of \(K\) gives \(\lim_{r\uparrow1}f(r)=f(1)\). Thus, \(f\) is continuous and concave on \([0,1]\), and therefore absolutely continuous. Using \(y=r^{n-1}\) and the definition of \(\rho_K\), we obtain
\[
\begin{aligned}
    \int_{(0,1]}y^{-1}\,\diff\rho_K(y)
    &=
    (n-1)\int_0^1[-f'(r)]\,\diff r
    +(n-1)f(1)\\
    &=
    (n-1)f(0)<\infty.
\end{aligned}
\]
The following definition therefore applies, in particular, to all
profile measures. The transform \(A_\rho\) introduced below is the
measure-theoretic extension of \(g\mapsto\alpha_g\). Its formula is
obtained by rewriting \(\alpha_g\) in terms of the measure
\(\rho=g\rho_0\).

\begin{definition}[Measure extension of \(\mathcal B\)]
\label{def:B-mass}
Let \(\rho\) be a finite signed Borel measure on \([0,1]\) with
\[
    \rho(\{0\})=0
    \quad \text{and} \quad
    \int_{(0,1]}y^{-1}\,\diff|\rho|(y)<\infty.
\]
For \(0<t<1\), set
\begin{equation}\label{eq:A-rho}
\begin{aligned}
    A_\rho(t)
    =
    \frac{2\kappa_{n-2}}{(n-1)t^{n-1}}
        \int_{\left[(1-t^2)^{(n-1)/2},\,1\right]}
    \frac{
        \bigl(y^{2/(n-1)}-1+t^2\bigr)^{(n-2)/2}
    }{y}
    \,\diff\rho(y).
\end{aligned}
\end{equation}
For such measures \(\rho_1,\ldots,\rho_{n-1}\), define
\begin{equation}\label{eq:B-mass}
\begin{aligned}
    \mathfrak B(\rho_1,\ldots,\rho_{n-1})
    =
     c_n
    \int_0^1
    \frac{t^n}{\sqrt{1-t^2}}\,
    A_{\rho_1}(t)\cdots A_{\rho_{n-1}}(t)\,\diff t,
\end{aligned}
\end{equation}
where \(c_n\) is the constant from Lemma~\ref{lem:B-faktorisierung}.
\end{definition}

The integrals in \eqref{eq:B-mass} converge absolutely, since \eqref{eq:A-rho} gives
\[
    |A_\rho(t)|
    \leq
    \frac{2\kappa_{n-2}}{(n-1)t}
    \int_{(0,1]}y^{-1}\,\diff|\rho|(y).
\]
If \(g\in C([0,1])\), then the change of variables
\(y=(1-t^2s^2)^{(n-1)/2}\) in \eqref{eq:A-rho} shows that
\(A_{g\rho_0}(t)=\alpha_g(t)\) for \(0<t<1\). Thus, Lemma~\ref{lem:B-faktorisierung} immediately yields
\begin{equation}\label{eq:B-erweiterung}
    \mathfrak B(g_1\rho_0,\ldots,g_{n-1}\rho_0)
    =
    \mathcal B(g_1,\ldots,g_{n-1}).
\end{equation}

We now pass from the smooth setting to profile measures and establish the comparison needed to treat equality without regularity assumptions.

\begin{proposition}[Profile measure estimate]
\label{prop:profilmass}
Let \(K\subseteq\mathbb R^n\) be an origin-symmetric convex body of revolution with
\(
    \operatorname{proj}_{e_n^\perp}K=B^{n-1}.
\)
Then
\begin{equation}\label{eq:profilmass-kette}
    \frac{V_n(\Pi K)}{V_n(\Pi B^n)}
    \geq
    \frac{\mathfrak B(\rho_K,\ldots,\rho_K)}
         {\mathfrak B(\rho_0,\ldots,\rho_0)}
    \geq
    \left(
        \frac{\rho_K([0,1])}{\rho_0([0,1])}
    \right)^{n-1}
    =
    \left(
        \frac{V_n(K)}{V_n(B^n)}
    \right)^{n-1}.
\end{equation}
Equality in the second inequality holds if and only if \(\rho_K=c\rho_0\) for some constant \(c\geq0\).
\end{proposition}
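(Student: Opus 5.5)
The plan is to prove the three relations in \eqref{eq:profilmass-kette} separately: the volume identity on the right, the Hölder step in the middle, and the determinant estimate on the left. The first two carry over directly from the smooth case once everything is phrased through \(A_\rho\). The determinant estimate requires an extension of Lemma~\ref{lem:determinantenvergleich} to measures.

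\emph{Volume identity.} Integrate by parts as in the proof of Theorem~\ref{thm:petty-revolution}. Since \(f\) is absolutely continuous, we get
\[
\int_0^1 b_K(y)y^{1/(n-1)}\,\diff y=(n-1)\int_0^1[-f'(r)]r^{n-1}\,\diff r=(n-1)\Bigl(-f(1)+(n-1)\int_0^1 f r^{n-2}\,\diff r\Bigr).
\]
The atom \((n-1)f(1)\delta_1\) cancels the boundary term. Hence \(\rho_K([0,1])=(n-1)^2\int_0^1 fr^{n-2}\,\diff r\), and \(V_n(K)=\frac{2\kappa_{n-1}}{n-1}\rho_K([0,1])\). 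The same formula holds for \(\rho_0\) and \(B^n\), which gives the right-hand equality.

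\emph{Middle inequality.} First compute \(\mathfrak B(\rho,\rho_0,\ldots,\rho_0)\). The constant \(A_{\rho_0}=\alpha_1\) is positive. Fubini in \(t\), applied with the kernel \eqref{eq:A-rho}, and the beta integral \eqref{beta} (after the substitution \(x^2=1-y^{2/(n-1)}\)) should show that \(\mathfrak B(\rho,\rho_0,\ldots,\rho_0)=C\,\rho([0,1])\) with a constant independent of \(\rho\). This is the measure analogue of \eqref{eq:B-I-quotient}; the factor \(y^{-1}(\cdots)^{(n-2)/2}\) integrated against \(t^n/\sqrt{1-t^2}\) reduces to a constant. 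Then Hölder's inequality, applied as in \eqref{eq:Holder} to \(A_{\rho_K}\ge0\) and \(A_{\rho_0}\), gives
\[
\mathfrak B(\rho_K,\rho_0,\ldots)^{n-1}\le \mathfrak B(\rho_K,\ldots,\rho_K)\,\mathfrak B(\rho_0,\ldots)^{n-2}.
\]

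\emph{Equality in the middle inequality.} Equality in Hölder forces \(A_{\rho_K}=c\,\alpha_1\) almost everywhere. Since \(A_{\rho_K}\) is continuous in \(t\), the identity holds everywhere, so \(A_{\rho_K-c\rho_0}\equiv0\). We then need injectivity of \(\rho\mapsto A_\rho\) on signed measures. Multiplying \eqref{eq:A-rho} by \(t^{n-1}\) and substituting \(u=1-y^{2/(n-1)}\) produces an Abel-type integral \(\int_{[0,z]}(z-u)^{(n-2)/2}\,\diff\tilde\rho(u)\) with \(z=t^2\), where \(\tilde\rho\) is the pushforward of \(y^{-1}\rho\). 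This is \(\mathcal I^{n/2}\) applied to a finite measure. Applying \(\mathcal I^{k-n/2}\) and differentiating \(k\) times in the distributional sense, as in Lemma~\ref{lem:alpha-injektiv}, yields \(\tilde\rho=0\) on \([0,1)\), and hence \(\rho=0\). The converse direction is immediate from multilinearity.

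\emph{First inequality (main obstacle).} Here the plan is to redo Lemma~\ref{lem:determinantenvergleich} with \(\Pi K\) written via Lemma~\ref{lem:zonoid-volume} through a general generating measure. Cauchy's formula writes \(h_{\Pi K}\) as an integral over the two graph caps plus the cylindrical part. The cylindrical part has normals \((\theta,0)\) and mass \(2f(1)\) times \(\sigma_{n-2}\). This corresponds to the "point at infinity" \(y=1\) in the spherical variables, where \(u=(\theta,0)\) lies on the equator, and it matches the atom of \(\rho_K\) at \(1\).

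On the sphere, represent \(\Pi K\) as a zonoid generated by an even measure \(\mu_K\) on \(\mathbb S^{n-1}\). Its radial part is given by \(\rho_K/\rho_0\) in the sense that \(\mu_K\) is the image of \(\rho_K\) times the uniform measure on \(\mathbb S^{n-2}\). The factor \(\langle u,e_n\rangle\) is absorbed as before, and the equatorial atom appears as a horizontal vector \((\theta,0)\) of weight \((n-1)f(1)\). The argument \(|D|\ge\varepsilon D\) with \(\varepsilon=\operatorname{sgn}\det(u_1,\ldots,u_n)\) is pointwise and sign-free, so it survives. The cofactor integration via \eqref{eq:sphaerische-sign-identitaet} needs only the free variable \(u_i\) to be integrated against uniform \(\sigma_{n-1}\), while the other \(n-1\) variables carry \(\mu_K\). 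This yields the measure version of \(\mathcal B\), in which the \(n-1\) variables are distributed according to \(\mu_K\).

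It remains to identify this with \(\mathfrak B(\rho_K,\ldots,\rho_K)\). The cleanest route is approximation: take smooth \(g_j\rho_0\to\rho_K\) weakly with \(\int y^{-1}\,\diff|\cdot|\) bounded, and check that both the \(\Delta\langle N,e_n\rangle^2\)-integral and \(\mathfrak B\) are continuous under this convergence. Continuity of \(\mathfrak B\) follows from dominated convergence using the bound \(|A_\rho(t)|\lesssim t^{-1}\). Equivalently, one can approximate \(K\) by smooth bodies of revolution \(K_j\) whose profile measures converge in this sense, and pass to the limit in \(T(b_{g_j})\ge n\mathcal B(g_j,\ldots)\) using \eqref{eq:B-erweiterung} and continuity of \(V_n\circ\Pi\). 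The delicate point is controlling the mass concentrating near \(y=1\) that forms the atom, which is why I expect this step to be the main work.
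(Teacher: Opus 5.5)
Your outline follows the paper's proof. It has the same four parts: the volume identity by integration by parts with the atom cancelling the boundary term; the formula $\mathfrak B(\rho,\rho_0,\ldots,\rho_0)=C\,\rho([0,1])$ followed by H\"older; injectivity of $\rho\mapsto A_\rho$ via measure-valued Riemann--Liouville integrals; and the first inequality by approximating $K$ with smooth origin-symmetric bodies of revolution $K_j$ and passing to the limit in the smooth estimate. The measure-valued version of Lemma~\ref{lem:determinantenvergleich} you sketch first is unnecessary, since with the body approximation the lemma applies to each $K_j$ as it stands.

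\textbf{The gap.} The passage to the limit, which is the only genuinely new step, is assumed rather than proved. You posit bodies $K_j$ ``whose profile measures converge in this sense''. You call the formation of the atom at $y=1$ the delicate point and expect it to be the main work, but you never show that $\rho_{K_j}\to\rho_K$ weakly on $[0,1]$. You also never show that $\int_{(0,1]}y^{-1}\diff\rho_{K_j}$ stays bounded. Note too that each $K_j$ must first be rescaled in $e_n^\perp$ so that $\operatorname{proj}_{e_n^\perp}K_j=B^{n-1}$; otherwise $\rho_{K_j}$ is not defined. Without this, the pointwise convergence $A_{\rho_{K_j}}(t)\to A_{\rho_K}(t)$ behind your dominated convergence argument is unfounded. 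This is exactly where cylindrical boundary parts enter.

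\textbf{How to close it.} You already have the tool from the volume identity: run the same integration by parts against a test function $\varphi\in C^1([0,1])$. After $y=r^{n-1}$, the boundary term $-(n-1)f(1)\varphi(1)$ is cancelled exactly by the atom, and
\[
    \int_{[0,1]}\varphi\diff\rho_K
    =(n-1)^2\int_0^1 f(r)\bigl[r^{n-2}\varphi(r^{n-1})+r^{2n-3}\varphi'(r^{n-1})\bigr]\diff r ,
\]
which involves $f$ but not $f'$. The normalized profiles $f_j$ converge to $f$ locally uniformly on $[0,1)$ and are uniformly bounded, hence converge in $L^1$. So $\int\varphi\diff\rho_{K_j}\to\int\varphi\diff\rho_K$ for every $\varphi\in C^1([0,1])$. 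Taking $\varphi=1$ bounds the masses, and density of $C^1$ in $C$ then gives weak convergence, with the atom appearing automatically.

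Mass piling up at $y=1$ is then harmless. For fixed $t\in(0,1)$, the kernel $k_t(y)=y^{-1}\bigl(y^{2/(n-1)}-1+t^2\bigr)_+^{(n-2)/2}$ is bounded and continuous on all of $[0,1]$, so $A_{\rho_{K_j}}(t)\to A_{\rho_K}(t)$. Moreover $\int_{(0,1]}y^{-1}\diff\rho_{K_j}=(n-1)f_j(0)$ is bounded, so your majorant $C\,t/\sqrt{1-t^2}$ works. Alternatively, as the paper does, Fatou's lemma suffices, since only $\mathfrak B(\rho_K,\ldots,\rho_K)\leq\liminf_j\mathfrak B(\rho_{K_j},\ldots,\rho_{K_j})$ is needed.

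\textbf{A smaller slip in the injectivity step.} Differentiating $\mathcal I^k\tilde\rho$ in the sense of distributions on $(0,1)$ only shows that $\tilde\rho$ vanishes on the open interval, not on $[0,1)$. A possible atom at $u=0$ remains, and it is precisely the image of the cylindrical atom at $y=1$. One more line handles it: $\mathcal I^{n/2}(a\delta_0)(z)=a\,z^{n/2-1}/\Gamma(n/2)$ vanishes on $(0,1)$ only if $a=0$.
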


\begin{proof}
\emph{Volume and approximation.} For every function \(\varphi\in C^1([0,1])\), the substitution \(y=r^{n-1}\) and integration by parts yield
\begin{equation}\label{eq:profilmass-partielle-integration}
\begin{aligned}
    \int_{[0,1]}\varphi(y)\,\diff\rho_K(y)
    =
    (n-1)^2\int_0^1 f(r)
    \Bigl[
        r^{n-2}\varphi(r^{n-1})
        +r^{2n-3}\varphi'(r^{n-1})
    \Bigr]\,\diff r.
\end{aligned}
\end{equation}
For \(\varphi=1\), it follows from
\eqref{eq:profilmass-partielle-integration} that
\[
    \rho_K([0,1])
    =
    (n-1)^2\int_0^1f(r)r^{n-2}\,\diff r.
\]
On the other hand, by the representation of \(K\) in terms of \(f\) and
polar coordinates in \(\mathbb R^{n-1}\),
\[
    V_n(K)
    =
    2(n-1)\kappa_{n-1}
    \int_0^1f(r)r^{n-2}\,\diff r.
\]
Hence
\begin{equation}\label{eq:volumen-profilmass}
    V_n(K)
    =
    \frac{2\kappa_{n-1}}{n-1}\rho_K([0,1]).
\end{equation}

Let \(\widetilde K_j, j \in \mathbb N,\) be origin-symmetric bodies of revolution of class \(C^\infty_+\) with \(\widetilde K_j\to K\) in the Hausdorff metric. Write \(\operatorname{proj}_{e_n^\perp}\widetilde K_j=r_jB^{n-1}\). Since orthogonal projections are continuous with respect to the Hausdorff metric, the convergence \(\widetilde K_j\to K\) implies \(r_j\to1\). After the scaling
\[
    K_j
    =
    \{(r_j^{-1}x,z):(x,z)\in\widetilde K_j\}
\]
we still have \(K_j\to K\), and at the same time
\(
    \operatorname{proj}_{e_n^\perp}K_j=B^{n-1}.
\)

If \(f_j\) are the corresponding boundary profiles, then \(f_j\to f\) locally uniformly on \([0,1)\) and, by the uniform boundedness of the \(f_j\), also in \(L^1([0,1])\). Thus, \eqref{eq:profilmass-partielle-integration} gives
\[
    \int_{[0,1]}\varphi\,\diff\rho_{K_j}
    \longrightarrow
    \int_{[0,1]}\varphi\,\diff\rho_K
    \qquad
    \text{for all }\varphi\in C^1([0,1]).
\]
Taking \(\varphi=1\), we obtain, in particular, convergence and hence uniform boundedness of the total masses. Since \(C^1([0,1])\) is dense in \(C([0,1])\), the preceding convergence extends to all continuous test functions. Thus, we have the weak convergence
\begin{equation}\label{eq:profilmass-konvergenz}
    \rho_{K_j}\to \rho_K.
\end{equation}

\emph{Determinant estimate.} For \(0<y<1\), set \(g_j(y)=b_{K_j}(y)/b_0(y)\). As in the smooth case, \(g_j\) extends to a positive continuous function on \([0,1]\), \(b_{g_j}=b_{K_j}\), and \(\rho_{K_j}=g_j\rho_0\). Therefore, \eqref{eq:volume-quotients}, Lemma~\ref{lem:determinantenvergleich}, and \eqref{eq:B-erweiterung} yield
\begin{equation}\label{eq:glatter-determinantenvergleich-mass}
    \frac{V_n(\Pi K_j)}{V_n(\Pi B^n)}
    \geq
    \frac{\mathfrak B(\rho_{K_j},\ldots,\rho_{K_j})}
         {\mathfrak B(\rho_0,\ldots,\rho_0)}.
\end{equation}
For \(0<t<1\), set, with \(a_+=\max\{a,0\}\),
\[
    k_t(0)=0,
    \qquad
    k_t(y)
    =
    \frac{
        \bigl(y^{2/(n-1)}-1+t^2\bigr)_+^{(n-2)/2}
    }{y},
    \qquad 0<y\leq1.
\]
Then \(k_t\) is a bounded continuous function on \([0,1]\), and
\[
    A_\rho(t)
    =
    \frac{2\kappa_{n-2}}{(n-1)t^{n-1}}
    \int_{[0,1]}k_t(y)\,\diff\rho(y).
\]
It therefore follows from \eqref{eq:profilmass-konvergenz} that \(A_{\rho_{K_j}}(t)\to A_{\rho_K}(t)\). Fatou's lemma gives
\[
    \mathfrak B(\rho_K,\ldots,\rho_K)
    \leq
    \liminf_{j\to\infty}
    \mathfrak B(\rho_{K_j},\ldots,\rho_{K_j}).
\]
Since the projection body operator and volume are continuous with respect to the Hausdorff metric, it follows from \eqref{eq:glatter-determinantenvergleich-mass} that the first inequality in \eqref{eq:profilmass-kette} holds.

\emph{Hölder's inequality.} 
Let \(\rho\) be a nonnegative measure satisfying the assumptions of Definition~\ref{def:B-mass}. For \(y\in(0,1]\), set \(x(y)=\sqrt{1-y^{2/(n-1)}}\). Since \(A_{\rho_0}=\alpha_1\) is a positive constant, substituting \eqref{eq:A-rho} into \eqref{eq:B-mass} and applying Tonelli's theorem yield, with a constant \(C_n>0\),
\[
\begin{aligned}
    \mathfrak B(\rho,\rho_0,\ldots,\rho_0)
    =
    C_n\int_{(0,1]}\frac1y
    \left(
        \int_{x(y)}^1
        \frac{
            t\bigl(t^2-x(y)^2\bigr)^{(n-2)/2}
        }{\sqrt{1-t^2}}
        \,\diff t
    \right)
    \diff\rho(y).
\end{aligned}
\]
By the beta identity \eqref{beta}, the inner integral equals
\[
    \frac12
    \mathrm B\left(\frac n2,\frac12\right)
    \bigl(1-x(y)^2\bigr)^{(n-1)/2}
    =
    \frac12
    \mathrm B\left(\frac n2,\frac12\right)y.
\]
Consequently, with another constant \(C_n'>0\),
\[
    \mathfrak B(\rho,\rho_0,\ldots,\rho_0)
    =
    C_n'\rho([0,1]).
\]
This computation also includes a possible atom at \(y=1\), since
\(x(1)=0\) and the same beta identity applies there. Applying the
preceding formula to \(\rho\) and to \(\rho_0\) gives
\begin{equation}\label{eq:B-mass-quotient}
    \frac{\mathfrak B(\rho,\rho_0,\ldots,\rho_0)}
         {\mathfrak B(\rho_0,\ldots,\rho_0)}
    =
    \frac{\rho([0,1])}{\rho_0([0,1])}.
\end{equation}
Hölder's inequality with respect to the measure \(\frac{t^n}{\sqrt{1-t^2}}\,\diff t\) gives, exactly as in the proof of Lemma~\ref{lem:analytisch},
\[
\begin{aligned}
    \mathfrak B(\rho_K,\rho_0,\ldots,\rho_0)^{n-1}
    \leq
    \mathfrak B(\rho_K,\ldots,\rho_K)
    \mathfrak B(\rho_0,\ldots,\rho_0)^{n-2}.
\end{aligned}
\]
Together with \eqref{eq:B-mass-quotient}, it follows that
\[
    \frac{\mathfrak B(\rho_K,\ldots,\rho_K)}
         {\mathfrak B(\rho_0,\ldots,\rho_0)}
    \geq
    \left(
        \frac{\rho_K([0,1])}{\rho_0([0,1])}
    \right)^{n-1}.
\]
Using \eqref{eq:volumen-profilmass}, this gives the second inequality in \eqref{eq:profilmass-kette}.

\emph{Equality case.} Since \(A_{\rho_0}=\alpha_1\) is a positive constant, equality in Hölder's inequality, see \cite[Problem~7.2.15]{Heil2019}, gives \(A_{\rho_K}(t)=cA_{\rho_0}(t)\) for almost every \(t\in(0,1)\) and some \(c\geq0\). For every compact interval \([a,b]\subseteq(0,1)\), the kernel \((t,y)\mapsto k_t(y)\) defined above is jointly continuous and bounded on \([a,b]\times[0,1]\). Thus, \(A_\rho\), for every measure \(\rho\) in the above class, is continuous on \((0,1)\) by dominated convergence. Consequently,
\[
    A_{\rho_K}(t)=cA_{\rho_0}(t)
\]
for all \(t\in(0,1)\).

It remains to show that
\[
    A_{\rho_K}=cA_{\rho_0}
    \qquad\Longrightarrow\qquad
    \rho_K=c\rho_0,
\]
that is, to extend the injectivity from Lemma~\ref{lem:alpha-injektiv} to profile measures. Set \(\nu=\rho_K-c\rho_0\), so that \(A_\nu=0\), and define on \((0,1]\) the finite signed measure \(\widehat\nu\) by \(\diff\widehat\nu(y)=y^{-1}\diff\nu(y)\). Let \(\mu_\nu\) denote the pushforward of \(\widehat\nu\) under the map \(y\mapsto1-y^{2/(n-1)}\). For a finite signed measure \(\mu\) and \(\gamma>1\), we use the corresponding measure-valued analogue of the Riemann--Liouville integral and set
\[
    (\mathcal I^\gamma\mu)(z)
    =
    \frac1{\Gamma(\gamma)}
    \int_{[0,z]}(z-u)^{\gamma-1}\,\diff\mu(u).
\]
With \(z=t^2\), \eqref{eq:A-rho} becomes
\begin{equation*}
    z^{(n-1)/2}A_\nu(\sqrt z)
    =
    \frac{2\kappa_{n-2}}{n-1}
    \int_{[0,z]}
    (z-u)^{(n-2)/2}\,\diff\mu_\nu(u).
\end{equation*}
Up to a positive constant, the right-hand side is the Riemann--Liouville integral \(\mathcal I^{n/2}\mu_\nu\). Since \(A_\nu=0\), it follows that \(\mathcal I^{n/2}\mu_\nu=0\) on \((0,1)\).

Choose an integer \(k>n/2+1\), and set
\[
    \alpha=\frac n2,
    \qquad
    \beta=k-\frac n2>1.
\]
The semigroup property remains valid for the finite signed measure
\(\mu_\nu\). Indeed, Fubini's theorem gives
\[
\begin{aligned}
    \mathcal I^\beta\mathcal I^\alpha\mu_\nu(z)
    &=
    \frac1{\Gamma(\alpha)\Gamma(\beta)}
    \int_{[0,z]}
    \left(
        \int_u^z
        (z-s)^{\beta-1}(s-u)^{\alpha-1}\,\diff s
    \right)
    \diff\mu_\nu(u)\\
    &=
    \frac1{\Gamma(k)}
    \int_{[0,z]}(z-u)^{k-1}\,\diff\mu_\nu(u)
    =
    \mathcal I^k\mu_\nu(z).
\end{aligned}
\]
Fubini's theorem is applicable because the same calculation with \(\diff|\mu_\nu|\) gives a finite integral. Consequently, \(\mathcal I^k\mu_\nu=0\) on \((0,1)\). For integer \(k\), taking the distributional derivative of order \(k-1\) of the integral representation of \(\mathcal I^k\mu_\nu\) gives
\[
    \frac{\diff^{k-1}}{\diff z^{k-1}}
    \mathcal I^k\mu_\nu(z)
    =
    \mu_\nu([0,z])
\]
for almost every \(z\in(0,1)\). The distributional derivative of the
function \(z\mapsto\mu_\nu([0,z])\) is
\(\mu_\nu|_{(0,1)}\). Consequently,
\[
    \frac{\diff^k}{\diff z^k}
    \mathcal I^k\mu_\nu
    =
    \mu_\nu|_{(0,1)}
\]
in the sense of distributions on \((0,1)\). Since
\(\mathcal I^k\mu_\nu=0\) on \((0,1)\), it follows that
\(\mu_\nu\) vanishes on \((0,1)\). Moreover, by construction, \(\mu_\nu(\{1\})=0\), since the map \(y\mapsto1-y^{2/(n-1)}\) sends \((0,1]\) into \([0,1)\). Thus, at most \(\mu_\nu=a\delta_0\) remains for some \(a\in\mathbb R\). In this case, however,
\[
    \mathcal I^{n/2}\mu_\nu(z)
    =
    \frac{a}{\Gamma(n/2)}z^{n/2-1}.
\]
Since \(\mathcal I^{n/2}\mu_\nu=0\) on \((0,1)\), it follows that \(a=0\). Thus, \(\mu_\nu=0\). Since the map \(y\mapsto1-y^{2/(n-1)}\) is injective on \((0,1]\), it follows that \(\widehat\nu=0\). Together with \(\nu(\{0\})=0\), this yields \(\nu=0\), that is, \(\rho_K=c\rho_0\).

Conversely, if \(\rho_K=c\rho_0\) for some \(c\geq0\), then the homogeneity of \(\mathfrak B\) gives
\[
    \frac{\mathfrak B(\rho_K,\ldots,\rho_K)}
         {\mathfrak B(\rho_0,\ldots,\rho_0)}
    =
    c^{n-1}
    =
    \left(
        \frac{\rho_K([0,1])}{\rho_0([0,1])}
    \right)^{n-1},
\]
so equality holds in the second inequality of \eqref{eq:profilmass-kette}.
\end{proof}

\section{Proof of the Main Result}

We now combine the results of the preceding sections. The inequality follows by smooth approximation, while the characterization of the equality case is based on the profile measure statement proved above.

\begin{theorem}[Petty conjecture for convex bodies of revolution]
\label{thm:petty-revolution-allgemein}
Let \(n\geq3\), and let \(K\subseteq\mathbb R^n\) be a convex body of revolution. Then
\begin{equation} \label{eq:petty-general}
    V_n(\Pi K)
    \geq
    \frac{\kappa_{n-1}^{\,n}}{\kappa_n^{\,n-2}}
    V_n(K)^{n-1}.
\end{equation}
Equality holds if and only if \(K\) is an ellipsoid.
\end{theorem}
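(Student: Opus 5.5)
The plan is to reduce to the origin-symmetric, normalized case via Blaschke symmetrization, then invoke Proposition~\ref{prop:profilmass} for the inequality. For the equality case, the condition \(\rho_K=c\rho_0\) will be translated back into an ellipsoidal profile.

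\emph{Reduction.} Let \(K\) be an arbitrary convex body of revolution with axis \(\mathbb Re_n\). As in the proof of Theorem~\ref{thm:petty-revolution}, form \(K^{\mathrm B}\). Since \(S_{K^{\mathrm B}}=\frac12(S_K+S_{-K})\), the body \(K^{\mathrm B}\) is an origin-symmetric body of revolution. Cauchy's formula gives \(\Pi K^{\mathrm B}=\Pi K\), and by the Kneser--S\"uss inequality \(V_n(K^{\mathrm B})\geq V_n(K)\). None of these steps uses smoothness. It therefore suffices to prove \eqref{eq:petty-general} for \(K^{\mathrm B}\). After a linear scaling in \(e_n^\perp\), which preserves the Petty quotient, we may assume \(\operatorname{proj}_{e_n^\perp}K^{\mathrm B}=B^{n-1}\). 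Proposition~\ref{prop:profilmass} then yields
\[
    \frac{V_n(\Pi K^{\mathrm B})}{V_n(\Pi B^n)}\geq\left(\frac{V_n(K^{\mathrm B})}{V_n(B^n)}\right)^{n-1}.
\]
Using \(\Pi B^n=\kappa_{n-1}B^n\), this is exactly \eqref{eq:petty-general} for \(K^{\mathrm B}\), and hence for \(K\). Alternatively, the inequality alone follows from Theorem~\ref{thm:petty-revolution} by approximating with \(C^\infty_+\) bodies of revolution and using continuity of \(\Pi\) and volume; the proposition route is preferable because it sets up the equality case.

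\emph{Equality.} Suppose equality holds in \eqref{eq:petty-general}. Then equality holds throughout the chain. In particular, Kneser--S\"uss is an equality, and the second inequality in \eqref{eq:profilmass-kette} is an equality for the normalized \(K^{\mathrm B}\). The proposition gives \(\rho_{K^{\mathrm B}}=c\rho_0\), and \(c>0\) because the volume is positive.

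The main step is to read off the profile. Since \(\rho_0\) has no atom at \(1\), the atom \((n-1)f(1)\delta_1\) must vanish, so \(f(1)=0\). Comparing absolutely continuous parts gives \(b_K=cb_0\) almost everywhere, that is,
\[
    -f'(r)=\frac{cr}{\sqrt{1-r^2}}\quad\text{for a.e. } r\in(0,1).
\]
Since \(f\) is absolutely continuous on \([0,1]\) (as noted after Definition~\ref{def:profilmass}) and \(f(1)=0\), integration gives \(f(r)=c\sqrt{1-r^2}\). Hence \(K^{\mathrm B}\) is an ellipsoid.

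Next, equality in Kneser--S\"uss means that \(K\) and \(-K\) are homothetic; having equal volume, they are translates. Thus \(S_K=S_{-K}=S_{K^{\mathrm B}}\), and by uniqueness in the Minkowski problem \(K\) is a translate of \(K^{\mathrm B}\), hence an ellipsoid. Here I rely on the standard equality characterization of Kneser--S\"uss in \cite[Theorem~8.2.3]{Schneider2014}, exactly as in the smooth proof.

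Conversely, if \(K\) is an ellipsoid, it is an affine image of \(B^n\). Affine invariance of the quotient then gives equality.

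The only delicate point is ensuring that Proposition~\ref{prop:profilmass} applies to the nonsmooth \(K^{\mathrm B}\), together with the atom bookkeeping above. Both are already handled by the proposition's hypotheses, which require only origin symmetry and the normalization.
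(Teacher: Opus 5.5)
Your proposal is correct and follows essentially the paper's proof: Blaschke symmetrization and normalization, equality in the second inequality of Proposition~\ref{prop:profilmass} giving \(\rho_{K^{\mathrm B}}=c\rho_0\), reading off \(f(1)=0\) and \(f(r)=c\sqrt{1-r^2}\), and then the Kneser--S\"uss equality case together with uniqueness in the Minkowski problem. The only difference is that you obtain the inequality from Proposition~\ref{prop:profilmass} applied to \(K^{\mathrm B}\), whereas the paper approximates \(K\) directly by \(C^\infty_+\) bodies and invokes Theorem~\ref{thm:petty-revolution}; this changes nothing essential, since the proposition's first inequality is itself proved by smooth approximation.
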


\begin{proof}
By smoothing the support function while preserving rotational symmetry and adding a vanishing Euclidean ball, we choose convex bodies of revolution \(K_j\) of class \(C^\infty_+\) with \(K_j\to K\) in the Hausdorff metric. By Theorem~\ref{thm:petty-revolution},
\[
    V_n(\Pi K_j)
    \geq
    \frac{\kappa_{n-1}^{\,n}}{\kappa_n^{\,n-2}}
    V_n(K_j)^{n-1}.
\]
The continuity of volume and of the projection body operator yields the stated inequality for \(K\) upon passing to the limit.

We now consider the equality case. Let \(K^{\mathrm B}\) be the
Blaschke symmetral from \eqref{eq:blaschke-symmetral}, which is again
a body of revolution. As in the first step of the proof of
Theorem~\ref{thm:petty-revolution}, we have \(\Pi K^{\mathrm B}=\Pi K\)
and \(V_n(K^{\mathrm B})\geq V_n(K)\). Both statements hold without
any regularity assumption. Comparing the Petty inequality for
\(K^{\mathrm B}\) with equality for \(K\) gives
\(V_n(K^{\mathrm B})=V_n(K)\) and equality for \(K^{\mathrm B}\).

After a scaling in \(e_n^\perp\), we may assume that
\(
    \operatorname{proj}_{e_n^\perp}K^{\mathrm B}=B^{n-1}.
\)
Let \(f:[0,1]\to[0,\infty)\) denote the boundary profile of the normalized body \(K^{\mathrm B}\) from Definition~\ref{def:profilmass}. The two outer terms in the chain \eqref{eq:profilmass-kette} from Proposition~\ref{prop:profilmass} then agree, so equality also holds in the second inequality. Consequently, \(\rho_{K^{\mathrm B}}=c\rho_0\) for some \(c>0\). Since \(\rho_0\) has no atom at \(1\), we first obtain \(f(1)=0\). If we compare the absolutely continuous parts of the identity
\(\rho_{K^{\mathrm B}}=c\rho_0\), we obtain
\[
    b_{K^{\mathrm B}}(y)y^{1/(n-1)}
    =
    c\frac{y^{2/(n-1)}}{\sqrt{1-y^{2/(n-1)}}}
\]
for almost every \(y\in(0,1)\). Thus, with the substitution
\(r=y^{1/(n-1)}\), we obtain
\[
    -f'(r)
    =
    c\frac{r}{\sqrt{1-r^2}}
\]
for almost every \(r\in(0,1)\). Hence, \(f(r)=c\sqrt{1-r^2}\), and the normalized image of \(K^{\mathrm B}\) is an ellipsoid of revolution. Consequently, the original body \(K^{\mathrm B}\) is an ellipsoid.

The equality \(V_n(K^{\mathrm B})=V_n(K)\), established before
normalization, gives equality in \eqref{eq:kneser-suess}. The argument
at the end of the proof of Theorem~\ref{thm:petty-revolution} applies
without any regularity assumption and shows that \(K\) is a translate
of \(K^{\mathrm B}\). Thus, \(K\) is an ellipsoid.

Conversely, every ellipsoid attains equality in \eqref{eq:petty-general}, by affine invariance of the Petty quotient.
\end{proof}

\begin{corollary}[Affine extension]
\label{cor:petty-affine-extension}
Let \(n\geq3\), and let \(K\subseteq\mathbb R^n\) be a convex body whose
Blaschke symmetral \(K^{\mathrm B}\) is affinely equivalent to a body of
revolution. Then \eqref{eq:petty-general} holds, with equality if and
only if \(K\) is an ellipsoid.

The assumption on \(K^{\mathrm B}\) is equivalent to requiring
\(\Pi K\) to be linearly equivalent to a body of revolution.
\end{corollary}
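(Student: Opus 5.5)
The plan is to reduce the corollary to Theorem~\ref{thm:petty-revolution-allgemein} through the Blaschke symmetral. By its definition, \(K^{\mathrm B}\) is origin-symmetric and \(\Pi K^{\mathrm B}=\Pi K\). The Kneser--Süss inequality \eqref{eq:kneser-suess} gives \(V_n(K^{\mathrm B})\geq V_n(K)\); as observed in the proof of Theorem~\ref{thm:petty-revolution-allgemein}, neither fact needs regularity. Therefore
\[
\frac{V_n(\Pi K)}{V_n(K)^{n-1}}\geq\frac{V_n(\Pi K^{\mathrm B})}{V_n(K^{\mathrm B})^{n-1}}.
\]
By hypothesis, \(K^{\mathrm B}=AL+x\) for some \(A\in\operatorname{GL}(n)\), some \(x\in\mathbb R^n\), and some body of revolution \(L\). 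The Petty quotient is invariant under translations and invertible linear maps, because \(\Pi(AL)=|\det A|\,A^{-T}\Pi L\). Hence the right-hand side equals the Petty quotient of \(L\), and Theorem~\ref{thm:petty-revolution-allgemein} bounds it from below by the ball's quotient. This proves \eqref{eq:petty-general}.

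For the equality case, suppose \(K\) attains equality. Then both inequalities in the chain are equalities. Equality for \(L\) forces \(L\) to be an ellipsoid by Theorem~\ref{thm:petty-revolution-allgemein}, so \(K^{\mathrm B}\) is an ellipsoid. Equality in Kneser--Süss means \(K\) and \(-K\) are translates of each other. Hence \(S_K=S_{-K}=S_{K^{\mathrm B}}\), and Minkowski uniqueness makes \(K\) a translate of \(K^{\mathrm B}\), so \(K\) is an ellipsoid. This repeats the argument at the end of Theorem~\ref{thm:petty-revolution}. The converse holds by affine invariance.

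The main step is the equivalence in the last sentence of the statement. I would use the equivariance \(\Pi(AM)=|\det A|A^{-T}\Pi M\) together with the injectivity of \(\Pi\) on origin-symmetric bodies, which follows from injectivity of the cosine transform on even measures (Aleksandrov).

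\emph{First direction.} Suppose \(K^{\mathrm B}=AL\), where we may take \(L\) origin-symmetric with axis \(\mathbb R e_n\). Then \(\Pi K=\Pi K^{\mathrm B}=|\det A|A^{-T}\Pi L\). Since \(\Pi\) commutes with rotations, \(\Pi L\) is a body of revolution, so \(\Pi K\) is linearly equivalent to one.

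\emph{Second direction.} Suppose \(\Pi K=B\,R\), with \(B\in\operatorname{GL}(n)\) and \(R\) invariant under the rotations \(\phi\) fixing \(e_n\). Set \(M=B^{-T}K^{\mathrm B}\), which is origin-symmetric. Then
\[
\Pi M=|\det B|^{-1}B^{T}\Pi K^{\mathrm B}=|\det B|^{-1}B^{T}BR.
\]
This does not directly give rotational symmetry of \(\Pi M\). Instead, I would transport the symmetry group: the group \(G=BSO(n-1)B^{-1}\) leaves \(\Pi K\) invariant. Because \(\Pi(gM)=|\det g|g^{-T}\Pi M\) and \(\det g=1\), injectivity of \(\Pi\) shows that \(K^{\mathrm B}\) is invariant under \(G^{-T}=B^{-T}SO(n-1)B^{T}\). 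Then \(B^{T}K^{\mathrm B}\) is invariant under \(SO(n-1)\) acting on \(e_n^\perp\), and for \(n\geq3\) this makes it a body of revolution.

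I expect the main obstacle to be bookkeeping the transposes and constants in this group-conjugation step correctly.
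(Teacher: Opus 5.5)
Your proposal is correct and essentially follows the paper's proof. The inequality and equality case go through $\Pi K^{\mathrm B}=\Pi K$, Kneser--Süss, affine invariance and the Minkowski-uniqueness argument, and the equivalence uses the same ingredients, equivariance of $\Pi$ and injectivity on origin-symmetric bodies; your conjugated group $B^{-T}SO(n-1)B^{T}$ repackages the paper's normalization $A=|\det T|^{1/(n-1)}T^{-T}$ with $T=B^{-1}$, and you both end with a multiple of $B^{T}K^{\mathrm B}$. The detour you abandoned just used the wrong transpose: $\Pi(B^{-T}K^{\mathrm B})=|\det B|^{-1}B\,\Pi K^{\mathrm B}$ (not $|\det B|^{-1}B^{T}\Pi K^{\mathrm B}$), whereas $M=B^{T}K^{\mathrm B}$ gives $\Pi M=|\det B|\,B^{-1}\Pi K=|\det B|\,R$ directly, which is exactly the paper's route.
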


\begin{proof}
For \(A\in\operatorname{GL}(n)\), the transformation formula
\[
    \Pi(AK)=|\det A|A^{-T}\Pi K
\]
and translation invariance imply that the Petty quotient is affine
invariant. Thus, Theorem~\ref{thm:petty-revolution-allgemein} applies
to a suitable affine image of \(K^{\mathrm B}\). Together with
\(\Pi K^{\mathrm B}=\Pi K\) and \eqref{eq:kneser-suess}, this gives
\[
    \frac{V_n(\Pi K)}{V_n(K)^{n-1}}
    \geq
    \frac{V_n(\Pi K^{\mathrm B})}
         {V_n(K^{\mathrm B})^{n-1}}
    \geq
    \frac{\kappa_{n-1}^{\,n}}{\kappa_n^{\,n-2}}.
\]
Equality forces \(K^{\mathrm B}\) to be an ellipsoid and equality in
\eqref{eq:kneser-suess}. The argument at the end of the proof of
Theorem~\ref{thm:petty-revolution} then shows that \(K\) is a translate
of \(K^{\mathrm B}\), hence an ellipsoid. Conversely, ellipsoids attain
equality by affine invariance.

It remains to prove the equivalence of the two assumptions. One
implication follows from the transformation formula, the identity
\(\Pi K^{\mathrm B}=\Pi K\), and the fact that the projection body of
a body of revolution is again a body of revolution. Conversely,
suppose that \(T\Pi K\) is a body of revolution for some
\(T\in\operatorname{GL}(n)\), and set
\(A=|\det T|^{1/(n-1)}T^{-T}\). Then
\(\Pi(AK^{\mathrm B})=T\Pi K\). Since this body is origin-symmetric,
its axis passes through the origin. For every rotation \(R\) fixing
this axis pointwise,
\[
    \Pi(RAK^{\mathrm B})
    =R\Pi(AK^{\mathrm B})
    =\Pi(AK^{\mathrm B}).
\]
The injectivity of the projection body operator on origin-symmetric
convex bodies \cite[Theorem~3.3.6]{Gardner2006} yields
\(RAK^{\mathrm B}=AK^{\mathrm B}\). Hence, \(AK^{\mathrm B}\) is a
body of revolution.
\end{proof}

\section*{Acknowledgements}
The author would like to thank Franz Schuster for bringing Petty's
conjecture to his attention, Fabian Mussnig for carefully reading the
manuscript and for his valuable comments and suggestions, and Felix
Dorrek for an interesting exchange concerning the problem.

\section*{Note on the Use of Generative AI}

ChatGPT 5.6 Sol Max was used as a supporting tool both in the search for proof approaches and in their development and the linguistic presentation of the paper. All arguments were selected, developed further, critically examined, and brought into their present form by the author. The author bears full responsibility for the content.

\printbibliography

@online{ChenFengLiXiXu2026,
  author  = {Chen, Shibing and Feng, Yibin and Li, Yuanyuan and
             Xi, Dongmeng and Xu, Lei},
  title   = {{Petty}'s conjectured projection inequality in
             dimension three},
  date    = {2026-08-07},
  note    = {Preprint, MathSciDoc: 2608.23001},
  url     = {https://archive.ymsc.tsinghua.edu.cn/pacm_download/741/12780-pettyinequality.pdf},
  urldate = {2026-08-25}
}

@article{ChengYau1976,
  author  = {Cheng, Shiu-Yuen and Yau, Shing-Tung},
  title   = {On the regularity of the solution of the
             $n$-dimensional {Minkowski} problem},
  journal = {Communications on Pure and Applied Mathematics},
  volume  = {29},
  number  = {5},
  pages   = {495--516},
  year    = {1976},
  doi     = {10.1002/cpa.3160290504}
}

@online{DLMF-Beta,
  author       = {{NIST Digital Library of Mathematical Functions}},
  title        = {Beta Function},
  organization = {National Institute of Standards and Technology},
  note         = {Equation (5.12.1)},
  url          = {https://dlmf.nist.gov/5.12},
  urldate      = {2026-08-22}
}

@online{Dorrek2026,
  author  = {Dorrek, Felix},
  title   = {Proving a math conjecture with {AI}, and the shape of
             machine intelligence},
  date    = {2026-08},
  url     = {https://felixdorrek.com/writing/petty-conjecture},
  urldate = {2026-08-24}
}

@book{Federer1969,
  author    = {Federer, Herbert},
  title     = {Geometric Measure Theory},
  series    = {Die Grundlehren der mathematischen Wissenschaften},
  number    = {153},
  location  = {Berlin},
  publisher = {Springer-Verlag},
  year      = {1969}
}

@book{Gardner2006,
  author    = {Gardner, Richard J.},
  title     = {Geometric Tomography},
  edition   = {2},
  series    = {Encyclopedia of Mathematics and its Applications},
  number    = {58},
  location  = {Cambridge},
  publisher = {Cambridge University Press},
  year      = {2006},
  doi       = {10.1017/CBO9781107341029}
}

@online{gardner2025geometrictomographyupdate,
  author  = {Gardner, Richard J.},
  title   = {{Geometric Tomography}, Second Edition:
             Corrections and Update},
  date    = {2025-11-28},
  note    = {Version 2.1},
  url     = {https://faculty.gardner.wwu.edu/Update%20Version%202_1%20changes.pdf},
  urldate = {2026-07-29}
}

@article{GiannopoulosPapadimitrakis1999,
  author  = {Giannopoulos, Apostolos A. and
             Papadimitrakis, Michael},
  title   = {Isotropic surface area measures},
  journal = {Mathematika},
  volume  = {46},
  number  = {1},
  pages   = {1--13},
  year    = {1999},
  doi     = {10.1112/S0025579300007518}
}

@book{Heil2019,
  author    = {Heil, Christopher},
  title     = {Introduction to Real Analysis},
  series    = {Graduate Texts in Mathematics},
  number    = {280},
  location  = {Cham},
  publisher = {Springer},
  year      = {2019},
  doi       = {10.1007/978-3-030-26903-6}
}

@article{ivaki2018localuniqueness,
  author  = {Ivaki, Mohammad N.},
  title   = {A local uniqueness theorem for minimizers of
             {Petty}'s conjectured projection inequality},
  journal = {Mathematika},
  volume  = {64},
  number  = {1},
  pages   = {1--19},
  year    = {2018},
  doi     = {10.1112/S0025579317000444}
}

@article{Lutwak1990Quermassintegrals,
  author  = {Lutwak, Erwin},
  title   = {On quermassintegrals of mixed projection bodies},
  journal = {Geometriae Dedicata},
  volume  = {33},
  number  = {1},
  pages   = {51--58},
  year    = {1990},
  doi     = {10.1007/BF00147600}
}

@incollection{lutwak1990conjecturedprojection,
  author    = {Lutwak, Erwin},
  title     = {On a conjectured projection inequality of {Petty}},
  editor    = {Grinberg, Eric and Quinto, Eric Todd},
  booktitle = {Integral Geometry and Tomography},
  series    = {Contemporary Mathematics},
  number    = {113},
  pages     = {171--182},
  location  = {Providence, RI},
  publisher = {American Mathematical Society},
  year      = {1990},
  doi       = {10.1090/conm/113/1108653}
}

@article{ortegamoreno2021fixedpointsminkowskivaluations,
  author  = {Ortega-Moreno, Oscar and Schuster, Franz E.},
  title   = {Fixed points of {Minkowski} valuations},
  journal = {Advances in Mathematics},
  volume  = {392},
  eid     = {108017},
  year    = {2021},
  doi     = {10.1016/j.aim.2021.108017}
}

@inproceedings{petty1971isoperimetricproblems,
  author    = {Petty, C. M.},
  title     = {Isoperimetric problems},
  editor    = {Kay, David C.},
  booktitle = {Proceedings of the Conference on Convexity and
               Combinatorial Geometry},
  pages     = {26--41},
  location  = {Norman, OK},
  publisher = {Department of Mathematics, University of Oklahoma},
  year      = {1971}
}

@article{saroglou2011volumesprojectionbodies,
  author  = {Saroglou, Christos},
  title   = {Volumes of projection bodies of some classes of
             convex bodies},
  journal = {Mathematika},
  volume  = {57},
  number  = {2},
  pages   = {329--353},
  year    = {2011},
  doi     = {10.1112/S0025579311001860}
}

@article{saroglou2015secondprojectionbody,
  author  = {Saroglou, Christos},
  title   = {On the shape of a convex body with respect to its
             second projection body},
  journal = {Advances in Applied Mathematics},
  volume  = {67},
  pages   = {55--74},
  year    = {2015},
  doi     = {10.1016/j.aam.2015.03.004}
}

@article{SaroglouZvavitch2017,
  author  = {Saroglou, Christos and Zvavitch, Artem},
  title   = {Iterations of the projection body operator and a remark
             on {Petty}'s conjectured projection inequality},
  journal = {Journal of Functional Analysis},
  volume  = {272},
  number  = {2},
  pages   = {613--630},
  year    = {2017},
  doi     = {10.1016/j.jfa.2016.08.015}
}

@article{schneider1987geometricinequalities,
  author  = {Schneider, Rolf},
  title   = {Geometric inequalities for {Poisson} processes of
             convex bodies and cylinders},
  journal = {Results in Mathematics},
  volume  = {11},
  number  = {1--2},
  pages   = {165--185},
  year    = {1987},
  doi     = {10.1007/BF03323266}
}

@book{Schneider2014,
  author    = {Schneider, Rolf},
  title     = {Convex Bodies: The {Brunn--Minkowski} Theory},
  edition   = {2},
  series    = {Encyclopedia of Mathematics and its Applications},
  number    = {151},
  location  = {Cambridge},
  publisher = {Cambridge University Press},
  year      = {2014},
  doi       = {10.1017/CBO9781139003858}
}

@book{SchneiderWeil2008,
  author    = {Schneider, Rolf and Weil, Wolfgang},
  title     = {Stochastic and Integral Geometry},
  series    = {Probability and Its Applications},
  location  = {Berlin},
  publisher = {Springer},
  year      = {2008},
  doi       = {10.1007/978-3-540-78859-1}
}

@article{Weil1971,
  author  = {Weil, Wolfgang},
  title   = {{\"U}ber die Projektionenk{\"o}rper konvexer Polytope},
  journal = {Archiv der Mathematik},
  volume  = {22},
  pages   = {664--672},
  year    = {1971},
  doi     = {10.1007/BF01222633}
}

@book{SamkoKilbasMarichev1993,
  author    = {Samko, Stefan G. and Kilbas, Anatoly A. and
               Marichev, Oleg I.},
  title     = {Fractional Integrals and Derivatives:
               Theory and Applications},
  location  = {Yverdon},
  publisher = {Gordon and Breach Science Publishers},
  year      = {1993},
  isbn      = {978-2-88124-864-1}
}

@article{BaranyHugReitznerSchneider2017,
  author  = {B{\'a}r{\'a}ny, Imre and Hug, Daniel and
             Reitzner, Matthias and Schneider, Rolf},
  title   = {Random points in halfspheres},
  journal = {Random Structures \& Algorithms},
  volume  = {50},
  number  = {1},
  pages   = {3--22},
  year    = {2017},
  doi     = {10.1002/rsa.20644}
}

\end{document}